\newtheorem{thm}{Theorem}[section]
\newtheorem{lem}[thm]{Lemma}
\newtheorem{prop}[thm]{Proposition}
\newtheorem{Remark}{Remark}
\numberwithin{equation}{section}
\begin{document}

\title{Riemannian submersions from compact four manifolds}

\author{Xiaoyang Chen \footnote{The author is supported in part by NSF DMS-1209387.}}
 \date{}
\maketitle

\begin{abstract}
We show that under certain conditions, a nontrivial Riemannian submersion from positively curved four manifolds
does $not$ exist. This gives a partial answer to a conjecture due to Fred Wilhelm.
We also prove a rigidity theorem for Riemannian submersions with
totally geodesic fibers from compact four-dimensional Einstein manifolds.
\end{abstract}

\section{Introduction}
A smooth map $\pi: (M, g)\rightarrow (N,h)$ is
a Riemannian submersion if $\pi_*$ is surjective and satisfies the following property:
$$g_p(v,w)=h_{\pi(p)}(\pi_*v, \pi_*w)$$
for any $v,w$ that are tangent vectors in $TM_p$ and perpendicular to the kernel of $\pi_*$.

\vskip0.1in

A fundamental problem in Riemannian geometry is to study the interaction between curvature and topology.
A lot of important work has been done in this direction.  In this paper we study a similar problem for Riemannian submersions:

\vskip0.1in

\noindent \textbf{Problem:} {\itshape Explore the structure of $\pi$ under additional curvature assumptions of $(M,g)$.}

\vskip0.1in

When $(M,g)$ has constant sectional curvature, we have the following classification results (\cite{gromoll1988low}, \cite{walschap1992metric}, \cite{wi2001}).

\begin{thm}

Let $\pi: (M^m, g)\rightarrow (N,h)$ be a nontrivial  Riemannian submersion (i.e.  $0< dim N< dim M$) with connected fibers, where $(M^m,g)$ is compact and has constant sectional curvature $c$.

\vskip0.1in

1. If $c<0$, then there is no such Riemannian submersion.

\vskip0.1in

2. If $c=0$, then locally $\pi$ is the projection of a metric product onto one of its factors.

\vskip0.1in

3. If $c>0$ and $M^m$ is simply connected , then $\pi$ is metrically congruent to the Hopf fibration, i.e,
there exist isometries $f_1: M^m \rightarrow \mathbb{S}^{m}$ and
$f_2: N \rightarrow \mathbb{P}(\mathbb{K})$ such that $pf_1=f_2\pi$,
where $p$ is the standard projection from $\mathbb{S}^{m}$ to projective spaces $\mathbb{P}(\mathbb{K})$.
\end{thm}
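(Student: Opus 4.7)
My plan is to reduce the problem to O'Neill's fundamental equations, which express the sectional curvatures of the total space in terms of those of the base $N$, the intrinsic curvatures of the fibers, and the tensors $A, T$ that measure the failure of horizontal integrability and the second fundamental form of the fibers. Schematically, for orthonormal horizontal $X,Y$ one has
$$\sec_M(X,Y) = \sec_N(\pi_* X,\pi_* Y)-3\,|A_X Y|^2,$$
together with analogous identities for mixed and vertical two-planes. The hypothesis $\sec_M\equiv c$ turns these into algebraic-differential constraints on $A$, $T$, on the induced metric on $N$, and on the fibers.

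For part (1), $c<0$, my strategy is to show that any such $\pi$ would force $A\equiv 0$ and $T\equiv 0$, and to derive a contradiction from there. The horizontal identity alone only gives $\sec_N\ge c$, which is not immediately contradictory. I would exploit the mixed and vertical identities together with a divergence argument: integrate the appropriate O'Neill/Codazzi formula over a compact fiber (or over $M$) to pick up a sign that is incompatible with $c<0$ unless $A$ and $T$ vanish pointwise. Once $A\equiv T\equiv 0$, the horizontal distribution is integrable with totally geodesic leaves and the fibers are totally geodesic, so the local de Rham splitting realises $M$ locally as $F\times N$. But a product possesses mixed two-planes of zero sectional curvature, which contradicts $c<0$.

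For part (2), $c=0$, I would run the same vanishing analysis. The mixed planes of vanishing curvature are now consistent with $c=0$, so no contradiction arises; the local de Rham splitting then directly realises $\pi$, locally, as the projection of a Riemannian product onto one of its factors. For part (3), $c>0$ with $M^m$ simply connected, the constant-curvature hypothesis already forces $M\cong\mathbb S^m$. My plan is in two steps: first, adapt the fiber integration argument to conclude $T\equiv 0$, so that each fiber is a totally geodesic great subsphere $\mathbb S^k\subset\mathbb S^m$; second, exploit the O'Neill identities on the round sphere to show that $A$ satisfies a Clifford-module-type relation (essentially that $A_X^* A_X$ acts as $c\cdot\mathrm{Id}$ on the vertical space), which restricts the admissible fiber dimensions to $k\in\{1,3,7\}$ and puts the bundle into bijection with a standard Hopf-type fibration. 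A final global rigidity step, using simple connectedness and completeness, upgrades this pointwise algebraic identification to an honest isometric congruence $p\circ f_1=f_2\circ\pi$ with one of the standard projections $\mathbb S^m\to\mathbb P(\mathbb K)$.

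The main obstacle is part (3). The delicate piece is the Clifford/parallelisability analysis of $A$ on the horizontal bundle and, even more, the passage from pointwise algebraic data to a genuine global isometric identification with the model Hopf bundle, which requires a careful monodromy/holonomy argument using simple connectedness. Parts (1) and (2), by contrast, become essentially formal once the fiber-integration trick has been used to kill $A$ and $T$ and the local de Rham theorem has been invoked.
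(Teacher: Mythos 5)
First, note that the paper does not prove this statement at all: Theorem 1.1 is quoted as background, with the proof attributed to Gromoll--Grove \cite{gromoll1988low}, Walschap \cite{walschap1992metric}, and Wilking \cite{wi2001}. So there is no in-paper argument to compare against; your proposal has to stand on its own. For parts (1) and (2) your outline is reasonable and is essentially the route taken in \cite{walschap1992metric}: an integral (Bochner-type) argument over the compact total space, built from the mixed and vertical O'Neill equations, forces $A\equiv 0$ and $T\equiv 0$ when $c\le 0$; then the local de Rham splitting gives (2), and the resulting flat mixed planes contradict $c<0$, giving (1). You would still need to write down the precise divergence identity being integrated (the term $\langle(\nabla_XT)_UU,X\rangle$ does not have a sign pointwise and only becomes tractable after summing over a vertical frame and integrating), but the idea is sound.

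Part (3) is where there is a genuine gap. You present it as: kill $T$ by fiber integration, extract a Clifford-module relation for $A$, then globalize. This drastically underestimates the theorem. (a) Showing that the fibers of a Riemannian submersion from the round sphere are totally geodesic (equivalently, that the foliation is homogeneous) is the central difficulty of \cite{gromoll1988low}; it is not obtained by the same integration trick as in nonpositive curvature, where the curvature sign cooperates, and no such argument is sketched. (b) Even granting totally geodesic fibers, the Clifford-algebra analysis of $A$ (Escobales, Ranjan) only constrains the fiber dimension to $1,3,7$ and identifies the infinitesimal structure; upgrading this to the global isometric congruence $p\circ f_1=f_2\circ\pi$ is itself a substantial rigidity argument, which you acknowledge as ``delicate'' but do not supply. (c) Most importantly, the case of $\mathbb{S}^{15}$ with $7$-dimensional fibers resisted every such algebraic approach and was only settled in \cite{wi2001} by an index-parity argument for closed geodesics; your proposal contains no idea that would handle it. As written, part (3) is a plan to rediscover two landmark papers rather than a proof.
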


However, very little is known about the structure of $\pi$  if $(M,g)$ is not of constant curvature. In this paper we consider two different curvature conditions:

\vskip0.1in

1. $(M,g)$ has positive sectional curvature.

\vskip0.1in

2. $(M,g)$ is an Einstein manifold.

\vskip0.1in

 When $(M,g)$ has positive
 sectional curvature, we have the following important conjecture due to Fred Wilhelm (although never published anywhere).

\vskip0.1in

\noindent \textbf{Conjecture 1} {\itshape Let $\pi: (M,g)\rightarrow (N,h)$ be a nontrivial Riemannian submersion, where $(M,g)$ is
a compact Riemannian manifold with positive sectional curvature. Then $dim (F)< dim (N)$, where $F$ is the fiber
of $\pi$.}

\vskip0.1in

By Frankel's theorem \cite{Frankel}, it is not hard to see that Conjecture $1$ is true if at least two fibers of $\pi$ are totally geodesic.
In fact, since any two fibers do not intersect with each other, Frankel's theorem implies that $2$ $ dim (F)< dim (M)$.
 Hence $dim (F)< dim (N)$. If all fibers of $\pi$ are totally geodesic, we have the following much stronger result:

\begin{prop} \label{totally}
Let $\pi: (M,g)\rightarrow (N,h)$ be a nontrivial Riemannian submersion such that all fibers of $\pi$ are totally geodesic, where $(M,g)$ is
a compact Riemannian manifold with positive sectional curvature. Then $dim (F)< \rho(dim (N))+1$, where $F$ is any fiber
of $\pi$ and $\rho(n)$ is the maximal number of linearly independent vector fields on $S^{n-1}$.
\end{prop}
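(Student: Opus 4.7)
My plan is to show that the O'Neill $A$-tensor of the submersion, read off at a single tangent space of $M$, produces $\dim F$ linearly independent nowhere-vanishing tangent vector fields on a round sphere of dimension $\dim N - 1$, so that the conclusion follows immediately from the definition of $\rho$. Set $n=\dim N$, $k=\dim F$, fix a base point $p\in M$, and let $\mathcal{H}_p,\mathcal{V}_p\subset T_pM$ denote the horizontal and vertical subspaces.

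First I would recall that for any Riemannian submersion the $A$-tensor satisfies two antisymmetries: $A_X Y=-A_Y X$ for horizontal $X,Y$, and $\langle A_X V,Y\rangle = -\langle A_X Y,V\rangle$ for horizontal $X,Y$ and vertical $V$. In particular $A_X X=0$, so
$$\langle A_X V,X\rangle = -\langle A_X X,V\rangle = 0.$$
Hence for each fixed $V\in\mathcal{V}_p$ the map $X\mapsto A_X V$ restricts to a smooth tangent vector field $\Phi_V$ on the unit sphere $S^{n-1}\subset\mathcal{H}_p$.

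Next I would invoke the curvature input. Since the fibers are totally geodesic the O'Neill $T$-tensor vanishes, and the identity for mixed sectional curvatures reduces to $K_M(X\wedge V)=|A_X V|^2$ for unit orthogonal $X\in\mathcal{H}_p$ and $V\in\mathcal{V}_p$. Positivity of $K_M$ then forces the bilinear map $(X,V)\mapsto A_X V$ on $\mathcal{H}_p\times\mathcal{V}_p$ to be nondegenerate in each slot: whenever $X\ne 0$, the linear map $V\mapsto A_X V$ is injective on $\mathcal{V}_p$. Picking any basis $V_1,\dots,V_k$ of $\mathcal{V}_p$, the fields $\Phi_{V_1},\dots,\Phi_{V_k}$ are therefore pointwise linearly independent on $S^{n-1}$, so by definition $k\le\rho(n)$, which is the stated bound $\dim F<\rho(\dim N)+1$.

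The argument is essentially algebraic once one has O'Neill's formula and the antisymmetries of $A$ in hand, and these are standard, so I do not anticipate a real technical obstacle. The only conceptual step is the observation that totally geodesic fibers together with positive sectional curvature encode exactly the data of a pointwise linearly independent frame of tangent vector fields on the horizontal unit sphere, which is why the Hurwitz--Radon function $\rho$ appears.
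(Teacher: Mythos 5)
Your proposal is correct and is essentially the paper's own argument: both exploit that for totally geodesic fibers O'Neill's formula gives $K(X,V)=\|A^*_XV\|^2>0$, so for each fixed vertical vector the tensorial map $X\mapsto A_XV$ defines a tangent vector field on the unit sphere of $\mathcal{H}_p$, and a basis of $\mathcal{V}_p$ yields $\dim F$ pointwise linearly independent such fields, forcing $\dim F\le\rho(\dim N)$. The only difference is cosmetic (you work with $A_XV$ and spell out the antisymmetry giving tangency, where the paper uses the dual $A^*_X$), so no further comment is needed.
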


Notice that we always have $\rho(dim (N))+1\leq dim (N)-1+1=dim (N)$ and equality holds if and only $dim (N)=2,4$ or $8$.

\vskip0.1in

Although not explicitly stated, Proposition \ref{totally} appears in \cite{Fatness}. For completeness, we will give a proof in section $3$.

\vskip0.2in

When $dim(M)=4$, Conjecture $1$ is equivalent to the following conjecture.

\vskip0.1in

\noindent \textbf{Conjecture 2} {\itshape There is no nontrivial Riemannian submersion from any compact four manifold $(M^4,g)$ with positive sectional curvature. }

\vskip0.1in

In fact, suppose there exists such a Riemannian submersion $\pi: (M^4,g)\rightarrow (N,h)$. Then Conjecture $1$ would imply
$dim (N)=3$. Hence the Euler number of $M^4$ is zero.
On the other hand, since $(M^4,g)$ has positive sectional curvature, $H^1(M^4, \mathbb{R})=0$ by Bochner's vanishing theorem (\cite{Pet}, page $208$).
By Poincar$\acute{e}$ duality, the Euler number of $M^4$ is positive. Contradiction.

\vskip0.1in

 Let $\pi: (M,g)\rightarrow (N,h)$ be a Riemannian submersion. We say that a function $f$ defined on $M$ is basic if $f$ is constant along each fiber.
 A vector field $X$ on $M$ is basic if it is horizontal and is $\pi$-related to a vector field on $N$.
 In other words, $X$ is the horizontal lift of some vector field on $N$.
 Let $H$ be the mean curvature vector field of the fibers  and $A$ be the O'Neill tensor of $\pi$. 
 We denote by $\|A\|$ the norm of $A$, i.e.,  $\|A\|^2=\sum_{i,j}\|A_{X_i} X_j\|^2$, where $\{{X_i}\}$ is a local orthonormal basis of the horizontal distribution of $\pi$.
The next theorem  gives a partial answer to Conjecture $2$.

\begin{thm} \label{positive}
 There is $no$ nontrivial Riemannian submersion from any compact four manifold with positive sectional curvature such that
 either $\|A\|$ or $H$ is basic. 
\end{thm}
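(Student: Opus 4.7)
The plan is to rule out each of the three possible values $\dim N \in \{1,2,3\}$ for a nontrivial Riemannian submersion from $M^4$; only the case $\dim N = 2$ uses the ``basic'' hypothesis in an essential way. The case $\dim N = 3$ is the argument already outlined in the introduction: the $1$-dimensional vertical distribution forces $\chi(M)=0$, while Bochner's theorem and Poincar\'e duality force $\chi(M)>0$ under positive sectional curvature, a contradiction.

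For $\dim N = 1$, the base must be $S^1$ and the horizontal distribution is integrable, so $A\equiv 0$ and ``$\|A\|$ basic'' is automatic. Let $X$ be the unit horizontal basic vector field and $S$ the shape operator of the fibers in the direction of $X$. The trace on the vertical bundle of the Riccati equation $\nabla_X S + S^2 = -R_X$ reads
\[
X(h) + |S|^2 + \operatorname{Ric}_M(X,X) = 0, \qquad h := \operatorname{tr}(S) = \langle H, X\rangle,
\]
and a direct computation shows $\operatorname{div}_M(X) = -h$. Integration by parts on $M$ then yields
\[
\int_M h^2 \, dV + \int_M |S|^2 \, dV + \int_M \operatorname{Ric}_M(X,X)\, dV = 0,
\]
contradicting $\operatorname{Ric}_M > 0$. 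So this case is ruled out even without the basic hypothesis.

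The main case is $\dim N = 2$, where both the fibers and the base are two-dimensional. O'Neill's horizontal formula $K_M(X_1,X_2) = K_N - \tfrac32 \|A\|^2$, evaluated on an orthonormal horizontal frame, forces $K_N > 0$, so $N$ is diffeomorphic to $S^2$ or $\mathbb{RP}^2$. The basic hypothesis is precisely what lets key averaged quantities descend from $M$ to $N$: when $\|A\|$ is basic, the horizontal sectional curvature itself descends to $N$; when $H$ is basic, a first-variation computation for the fiber-volume function $V_F\colon N \to \mathbb R$ gives $\bar H = -\nabla_N \log V_F$, so that $\bar H$ is a gradient vector field on $N$. The plan is to combine these observations with the remaining O'Neill identities for the mixed and vertical sectional curvatures, integrate first along the fiber and then over $N$ via Stokes (together with Gauss--Bonnet on either the 2-dimensional fiber or base, as appropriate), and extract a sign-definite integral identity incompatible with $K_M > 0$. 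The hard part is isolating the correct identity: unlike the single clean trace identity available for $\dim N = 1$, the $(2,2)$ O'Neill formulas mix $A$, $T$, and their covariant derivatives asymmetrically, and the real work is arranging these contributions so that the basic hypothesis converts the cross-terms into exact divergences that vanish upon integration.
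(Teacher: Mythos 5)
Your reduction to the case $\dim N=2$ with $\chi(F)\neq 0$ is correct and is essentially what the paper does, though more economically: by Hermann's theorem $\pi$ is a locally trivial fibration, so $\chi(M)=\chi(N)\chi(F)$, and since Bochner plus Poincar\'e duality give $\chi(M)>0$, both factors must be nonzero; this kills $\dim N=1$ and $\dim N=3$ simultaneously (an odd-dimensional closed fiber or base has zero Euler characteristic), making your separate Riccati argument for $\dim N=1$ unnecessary. The problem is that the case $\dim N=2$ is the entire content of the theorem, and there you have only a plan, not a proof. You say yourself that ``the hard part is isolating the correct identity'' and that the ``real work'' of arranging the O'Neill terms into exact divergences remains to be done. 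Nothing in the proposal demonstrates that a sign-definite integral identity of the kind you describe actually exists, and there is reason to doubt the route: the $A$-tensor and its covariant derivatives enter the mixed curvatures with indefinite sign, and no clean Bochner-type identity on the base is known in this setting.

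The paper's mechanism for $\dim N=2$ is entirely different and is worth knowing: it is a topological obstruction on the \emph{fiber}, not an integral identity on the base. One shows the fiber carries a nowhere-vanishing vector field or a continuous codimension-one distribution, forcing $\chi(F)=0$ and contradicting $\chi(F)\neq 0$. When $\|A\|$ is basic, Walschap's theorem gives a point $p$ with $\|A\|(p)\neq 0$, hence $\|A\|\neq 0$ on all of $F_p$, and $s=A_XY/\|A_XY\|$ (for an oriented orthonormal basic frame $X,Y$) is a well-defined nowhere-vanishing vertical field on $F_p$. When $H$ is basic, the mean curvature form is closed and basic; since $H^1_b(M)=0$ (Bochner plus injectivity of $H^1_b\to H^1_{DR}$) one writes $\omega=d\hat f$ and rescales the vertical metric by $e^{2\hat f/k}$ to make every fiber minimal while preserving positive curvature on the fiber through the maximum point of $\hat f$; a Synge--Frankel second-variation argument between this fiber and a nearby one then produces the codimension-one distribution. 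If you want to complete your write-up, you either need to carry out this construction or actually exhibit the integral identity you are banking on; as it stands the main case is open.
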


We emphasize that in Conjecture $1$ the assumption that $(M,g)$ has positive sectional curvature can $not$ be replaced by
$(M,g)$ has positive sectional curvature $almost$ everywhere, namely, $(M,g)$ has nonnegative sectional curvature everywhere
and has positive sectional curvature on an open and dense subset of $M$.
Indeed, Let $g$ be the metric on $S^2 \times S^3$
constructed by B. Wilking which has positive sectional curvature $almost$ everywhere \cite{wilking2002manifolds}.
Then by a theorem of K. Tapp \cite{Tapp1220},
$g$ can be extended to a nonnegatively curved metric $\tilde{g}$ on $S^2 \times \mathbb{R}^4$ such that $(S^2 \times S^3, g)$ becomes
the distance sphere of radius $1$ about the soul. By Proposition \ref{weak}, we get a Riemannian submersion
$\pi: (S^2 \times S^3, g)\rightarrow (S^2,h)$, where $h$ is the induced metric on the soul $S^2$ from $\tilde{g}$.
This example shows that in Conjecture $1$ the assumption that $(M,g)$ has positive sectional curvature can $not$ be replaced by
$(M,g)$ has positive sectional curvature $almost$ everywhere.

\vskip0.1in

Riemannian submersions are also important in the study of compact Einstein manifolds, for example, see \cite{besse2007einstein}.
Our next theorem gives a complete classification
of Riemannian submersions with totally geodesic fibers from compact four-dimensional Einstein manifolds.
\begin{thm} \label{Einstein}
Suppose $\pi: (M^4,g)\rightarrow (N,h)$ is a Riemannian submersion, where $(M^4,g)$ is a compact four-dimensional Einstein manifold.
If all fibers of $\pi$ are totaly geodesic and have dimension $2$, then locally $\pi$  is the projection of a metric product $B^2(c)\times B^2(c)$ onto one of the factors,
where $B^2(c)$ is a two-dimensional compact manifold with constant curvature $c$.
\end{thm}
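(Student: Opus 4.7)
The plan is to combine O'Neill's Ricci curvature formulas for a submersion with totally geodesic fibers with a linear-algebra observation specific to the split $4=2+2$: the O'Neill tensor $A$ has rank at most one on horizontal bivectors, while the Einstein condition forces a certain quadratic expression in $A$ to be a scalar multiple of the identity on the $2$-dimensional vertical space.

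Fix an adapted orthonormal frame $X_1,X_2,U_1,U_2$ with $X_i$ horizontal and $U_j$ vertical. Since $H\equiv 0$, O'Neill's formulas give
\begin{align*}
\mathrm{Ric}(U,V) &= K_F\,g(U,V) + \sum_{i} g(A_{X_i}U,A_{X_i}V),\\
\mathrm{Ric}(X,Y) &= K_B\,g(X,Y) - 2\sum_{j} g(A_XU_j,A_YU_j),
\end{align*}
for vertical $U,V$ and horizontal $X,Y$, where $K_F$ and $K_B$ denote the Gaussian curvatures of the fiber and of the base. Using $A_{X_i}X_i=0$, $A_{X_2}X_1=-A_{X_1}X_2$, and the duality $g(A_{X_i}U_k,X_j)=-g(A_{X_i}X_j,U_k)$, the vertical contraction simplifies to
\[
\sum_{i} g(A_{X_i}U_k,A_{X_i}U_l) = 2\,g(A_{X_1}X_2,U_k)\,g(A_{X_1}X_2,U_l),
\]
which, viewed as a symmetric bilinear form on the $2$-dimensional vertical space, has rank at most one.

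The Einstein condition $\mathrm{Ric}=\lambda g$ rewrites the vertical formula as
\[
2\,g(A_{X_1}X_2,U_k)\,g(A_{X_1}X_2,U_l) = (\lambda-K_F)\,\delta_{kl}.
\]
A rank-$\leq 1$ symmetric form that equals a scalar multiple of the identity on a $2$-dimensional space must be the zero form, so $A_{X_1}X_2=0$ and $K_F=\lambda$. Hence $A\equiv 0$. Substituting $A=0$ into the horizontal formula then yields $K_B=\lambda$, and the mixed Einstein equation $\mathrm{Ric}(X,U)=0$ is automatic once $A=0$ and $H=0$.

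With $H\equiv 0$ and $A\equiv 0$, both the vertical and horizontal distributions are integrable with totally geodesic leaves; indeed the second fundamental form of a horizontal leaf in a vertical direction $V$ is $g(A_XY,V)$. A local de~Rham splitting then realizes a neighborhood of each point of $M^4$ as a Riemannian product of its horizontal and vertical leaves, both of which are $2$-manifolds of constant Gaussian curvature $\lambda$; taking $c=\lambda$ gives the conclusion. The only step that requires real care is the contraction identity leading to the rank-$\leq 1$ assertion on the vertical Einstein equation; once that is in hand the rigidity is immediate from the structure theory of submersions with vanishing $A$ and $T$.
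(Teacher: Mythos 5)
Your argument is correct and is essentially the paper's own proof in a cleaner linear-algebra packaging: both rest on O'Neill's Ricci formulas for totally geodesic fibers together with the observation that, in the $2+2$ split, the $A$-contribution to the vertical Ricci is the rank-$\le 1$ form $2\,g(A_{X_1}X_2,U_k)\,g(A_{X_1}X_2,U_l)$, which the Einstein condition forces to be a multiple of the identity and hence zero. The paper expresses the same rank count by noting that $A^*_X$ has nontrivial kernel and that its null direction is also null for $A^*_Y$ (its Lemma~\ref{tensor}), so no new idea is involved.
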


If the dimension of the fibers of $\pi$ is $1$ or $3$ (all fibers are not necessarily totally geodesic), then the Euler number of $M^4$ is zero. By a theorem of Berger \cite{Ber, hitchin1974compact}, $(M^4,g)$ must be flat.
 Hence by a theorem of Walschap \cite{walschap1992metric},  locally $\pi$  is the projection of a metric product $B^2(c)\times B^2(c)$ onto one of the factors.
\vskip0.1in

\noindent \textbf{Acknowledgment} This paper is a part of my Ph.D thesis at University of Notre Dame \cite{chen}.
The author would like to express gratitude to his advisor, Professor Karsten Grove, for many helpful discussions.
He also thanks Professor Anton Petrunin for discussing the proof of Theorem \ref{basic}. The author benefits a lot from his
"Exercises in orthodox geometry" \cite{petrunin2009exercises}.

\section{Preliminaries}

In this section we recall some definitions and facts on Riemannian submersions which will be used in this paper. We refer to \cite{O'Neil66} for more details.

\vskip0.1in

 Let $\pi: (M,g)\rightarrow (N,h)$ be a Riemannian submersion. Then $\pi$
induces an orthogonal splitting $TM=\mathcal{H}\oplus \mathcal{V}$, where $\mathcal{V}$ is tangent to the fibers and $\mathcal{H}$
is the orthogonal complement of $\mathcal{V}$. We write
$Z=Z^h + Z^v$ for the corresponding decomposition of $Z\in TM$. The O'Neill tensor $A$ is given by
$$A_X Y=(\nabla_X Y)^v=\frac{1}{2}([X,Y])^v,$$
where $X,Y\in \mathcal{H}$ and are $\pi$-related to some vector field on $N$, respectively.

\vskip0.1in

 Fix $X\in \mathcal{H}$, define $A^*_X$ by
$$A^*_X:\mathcal{V}\rightarrow \mathcal{H}$$
$$V\mapsto -(\nabla_X V)^h.$$
Then $A^*_X$ is the dual of $A_X$.

\vskip0.1in

 Define the mean curvature vector field $H$ of $\pi$ by
$$H=\sum_i (\nabla_{V_i} V_i)^h,$$
where $\{{V_i}\}_{i=1}^k$ is any orthonormal basis of $\mathcal{V}$ and $k=dim \mathcal{V}$.

\vskip0.1in

Define the mean curvature form $\omega$ of $\pi$ by
$$\omega(Z)=g(H,Z),$$
where $Z\in TM$.
It is clear that $i_V\omega =\omega (V)=0$ for any $V\in \mathcal{V}$.

\vskip0.1in

We say that a function $f$ defined on $M$ is basic if $f$ is constant along each fiber.
 A vector field $X$ on $M$ is basic if it is horizontal and is $\pi$-related to a vector field on $N$.
 In other words, $X$ is the horizontal lift of some vector field on $N$.
A differential form $\alpha$ on $M$ is called to be basic if and only $i_V \alpha=0$ and $\mathcal{L}_V \alpha=0$ for any $V\in\mathcal{V}$,
where $\mathcal{L}_V \alpha$ is the Lie derivative of $\alpha$.

\vskip0.1in

 The set of basic forms of $M$, denoted by $\Omega_b(M)$, constitutes a subcomplex
$$d:\Omega_b^r(M) \rightarrow \Omega_b^{r+1}(M)$$
of the De Rham complex $\Omega(M)$. The basic cohomology of $M$, denoted by $H^*_b(M)$, is defined to be the cohomology of
$(\Omega_b(M), d)$.

\begin{prop}\label{induce}
The inclusion map $i:\Omega_b(M)\rightarrow \Omega(M)$ induces an injective map
$$H^1_b(M)\rightarrow H^1_{DR}(M).$$
\end{prop}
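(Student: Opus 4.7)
The plan is to unwind the definitions: a basic $1$-form satisfies $i_V\alpha=0$ for every vertical $V$, so if $\alpha=df$ happens to be exact on $M$, then $V(f)=i_Vdf=i_V\alpha=0$, which forces the primitive $f$ to be a basic function. Thus any basic $1$-form that bounds in the full de Rham complex already bounds in the basic subcomplex, which is exactly the claim.

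Concretely, I would take a class $[\alpha]\in H^1_b(M)$ mapping to zero in $H^1_{DR}(M)$ and fix a smooth $f\in C^\infty(M)$ with $df=\alpha$. The first key step is to exploit $i_V\alpha=0$ for every $V\in\mathcal{V}$, which immediately gives $V(f)=0$ for every vertical vector field $V$. This says that the differential of $f$ vanishes on $\mathcal{V}$, so $f$ is locally constant along each fiber. The second key step is to upgrade this to global constancy on fibers, i.e.\ $f\in\Omega_b^0(M)$, which is the content of $f$ being basic as a function. Once $f$ is basic, the equality $\alpha=df$ witnesses $[\alpha]=0$ in $H^1_b(M)$, proving injectivity.

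The only real obstacle is the upgrade from locally constant on fibers to constant on fibers, which requires connectedness of the fibers. In the setting of this paper this is a standard assumption (compare Theorem~1.1 where fibers are taken connected), so the upgrade is automatic: if $F=\pi^{-1}(y)$ is connected and $f|_F$ is locally constant, then $f|_F$ is constant. Finally, one checks the remaining closure condition $\mathcal{L}_V f=V(f)=0$ for the basic-form definition, which is the very equation we already derived, so no additional work is needed.

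In summary, the proposition reduces to the tautology $V(f)=i_V(df)$ together with the fact that a smooth function whose vertical derivative vanishes on connected fibers descends (locally) to $N$; no curvature, compactness, or submersion-specific tools beyond the definition of basicness enter.
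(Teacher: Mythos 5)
Your argument is correct and is essentially the proof the paper points to: the paper itself only cites Tondeur (Proposition 4.1, pp.~33--34), and the standard argument there is exactly your observation that a primitive $f$ of an exact basic $1$-form $\alpha$ satisfies $V(f)=i_V\,df=i_V\alpha=0$ and is therefore a basic function, so $[\alpha]=0$ already in $H^1_b(M)$. Your remark about upgrading ``locally constant on fibers'' to ``constant on fibers'' via connectedness is a reasonable precaution given the paper's definition of a basic function, and does not affect the validity of the proof.
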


\begin{proof}
See pages $33-34$, Proposition $4.1$ in \cite{tondeur1997geometry}.
\end{proof}

\section{Proof of  Proposition \ref{totally} and Theorem \ref{positive} }

We first give a proof of Proposition \ref{totally}.

\begin{proof}

Fix $p\in M$ and
choose $X_p$ to be any point in the unit sphere of $\mathcal{H}_p$.
Extend $X_p$ to be a unit basic vector field $X$. Since all fibers of $\pi$ are totally geodesic, by O'Neill's formula (\cite{O'Neil66}),
$K(X,V)=\|A^*_XV\|^2$ for any unit vertical vector field $V$.
Since $K(X,V)>0$ by assumption, we see that $A^*_XV \neq 0$ for any $V\neq 0$.
Let $v_1, v_2, \cdots v_k$ be any orthonormal basis of $\mathcal{V}_p$, where $k=dim (F_p)$ and $F_p$ is the fiber passing through $p$.
Then $A^*_X(v_1), A^*_X(v_2), \cdots A^*_X(v_k)$ are linearly independent and are 	
perpendicular to $X_p$. Since $X_p$ could be any point in the unit sphere of $\mathcal{H}_p$, then we get
$k$ linearly independent vector fields on the unit sphere of $\mathcal{H}_p$. By the definition of $\rho(dim N)$, we see that
$dim (F_p)=k \leq \rho(dim (N)) < \rho(dim (N))+1$.
\end{proof}

\begin{Remark} \label{topo} It would be very interesting to know whether one can replace
$dim (F)< dim (N)$ by $dim (F)< \rho(dim (N))+1$ in Conjecture $1$.
It would be the Riemannian analogue of Toponogov's Conjecture (page $1727$ in \cite{Rovenski})
and would imply that $dim (N)$ must be even (In fact, if $dim(N)$ is odd, then $\rho(dim (N))=0$. Hence $dim (F)< \rho(dim (N))+1$
implies $dim (F)=0$ and hence $\pi$ is trivial, contradiction).
In particular, there would be no Riemannian submersion with one-dimensional fibers
from even-dimensional manifolds with positive sectional curvature.
\end{Remark}

Let $(M^m, g)$ be an $m$-dimensional compact manifold with positive sectional curvature, $m\geq4$ and $(N^2,h)$ be a $2$-dimensional compact Riemannian manifold.
Now we are going to prove the following theorem which implies Theorem \ref{positive}.
\begin{thm} \label{basic}
There is no Riemannian submersion $\pi: (M^m,g) \rightarrow (N^2, h)$
such that 

\vskip0.1in

1. the Euler numbers of the fibers are nonzero and

\vskip0.1in

2. either $\|A\|$ or $H$ is basic.

\end{thm}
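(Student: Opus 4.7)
The plan is to argue by contradiction, reducing each case to a Frankel-type intersection theorem played against the pairwise disjointness of distinct fibers (resp.\ leaves). The common construction is a canonical vertical vector field. After passing to the orientation double cover of $N$ if needed (which preserves the hypotheses), choose a local positively oriented orthonormal horizontal frame $X_1,X_2$ and set $Z:=A_{X_1}X_2\in\mathcal V$. Since a positively oriented orthonormal change of horizontal frame preserves $A_{X_1}X_2$, the vector field $Z$ is globally defined on $M$ and satisfies $\|Z\|^2=\tfrac{1}{2}\|A\|^2$.

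\textbf{Case $\|A\|$ basic.} First I would show $A\equiv 0$. The restriction $Z|_F$ is a tangent vector field on each fiber $F$, so Poincar\'e--Hopf together with $\chi(F)\ne 0$ forces $Z$ to vanish at some point of $F$; because $\|Z\|^2$ is basic, $\|Z\|$ is constant along $F$, whence $Z\equiv 0$ on $F$ and thus on all of $M$. With $A\equiv 0$ the horizontal distribution is integrable with totally geodesic $2$-dimensional leaves $L$, and $\pi|_L\colon L\to N$ is a local isometry. O'Neill's formula gives $K_N=K_M|_{\mathcal H}>0$, so by compactness of $N$ the Ricci curvature admits a positive lower bound, which transfers to each $L$; Myers' theorem then makes each leaf compact. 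For $m=4$, Frankel's theorem applied to two distinct horizontal leaves (compact totally geodesic $2$-submanifolds with dim-sum equal to $m$) forces an intersection point, contradicting the fact that distinct leaves of a foliation are disjoint.

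\textbf{Case $H$ basic.} Then $\omega=g(H,\cdot)$ is a basic $1$-form, and I would next show $d\omega=0$: on $\mathcal V\times\mathcal V$ this is automatic; on $\mathcal H\times\mathcal V$ it follows from $H$ being basic (so that $V\omega(X)=0$ for $X$ basic horizontal and $V$ vertical); and the $\mathcal H\times\mathcal H$ component descends to $d\bar\omega$ on $N^2$, where the change-of-fiber-volume identity combined with the basic hypothesis yields $\bar\omega=-d\log V$ for $V(q)=\mathrm{vol}(F_q)$, which is exact. Positive sectional curvature implies positive Ricci, so Bochner gives $H^1_{DR}(M,\mathbb R)=0$; Proposition~\ref{induce} then yields $H^1_b(M)=0$, so $\omega=df$ for a basic function $f=\bar f\circ\pi$. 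On compact $N^2$, $\bar f$ attains its maximum and minimum; at each, $\nabla^N\bar f=0$, so $H$ vanishes identically on the corresponding fiber, making it a compact minimal submanifold of $M^m$ of dimension $m-2$ (and if $\bar f$ is constant, every fiber is minimal). Since $2(m-2)\geq m$ for $m\geq 4$, Frankel's theorem for minimal submanifolds in a compact manifold of positive Ricci curvature forces $F_{\max}\cap F_{\min}\ne\emptyset$, contradicting disjointness of distinct fibers.

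\textbf{Main obstacle.} The delicate technical step is the closedness of $\omega$ in the $H$-basic case, which rests on the change-of-fiber-volume identity $\mathcal L_X\,dvol_F=-\omega(X)\,dvol_F$ for horizontal $X$; integrating this over the fiber and using that $\omega(X)$ descends to $N$ gives the logarithmic-derivative formula $\bar\omega=-d\log V$. A secondary point is confirming that the horizontal leaves in Case 1 are complete (so that Myers applies), which holds since they are totally geodesic in the compact $M$. Everything else is a careful combination of Poincar\'e--Hopf (producing the rigidity $A\equiv 0$, or the existence of a minimal fiber) with the totally geodesic and minimal versions of Frankel's theorem.
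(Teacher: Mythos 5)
Your $\|A\|$-basic case is essentially the paper's argument run in the contrapositive direction (the paper quotes Walschap's theorem that the $A$-tensor of a Riemannian submersion from a compact positively curved manifold cannot vanish identically, picks a fiber where $\|A\|\ne 0$, and gets a nowhere-zero vertical field there), and your reduction to $A\equiv 0$ is correct. But your final contradiction — Frankel applied to two horizontal leaves — only works when $m=4$: the leaves are $2$-dimensional totally geodesic submanifolds, so the dimension condition $2+2\ge m$ fails for every $m\ge 5$, while the theorem is stated for all $m\ge 4$. The clean fix is simply to cite Walschap at this point.

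The $H$-basic case has a genuine gap. There is no ``Frankel theorem for minimal submanifolds under positive Ricci curvature'' in codimension $2$: in $S^2\times S^2$ (which has positive Ricci curvature) the slices $S^2\times\{p\}$ and $S^2\times\{q\}$ are disjoint compact totally geodesic, hence minimal, surfaces with dimension sum equal to $\dim M$. The second-variation proof of such statements works only for minimal hypersurfaces, where parallel transport along the minimizing geodesic carries all of $T_pV$ into $T_qW$, so one can sum the index form over a full orthonormal basis and use $\operatorname{tr} II=0$. For the codimension-$2$ fibers here, the parallel transport of $T_pF_0$ and the space $T_qF_1$ are two $(m-2)$-planes inside the $(m-1)$-dimensional orthogonal complement of $\dot{\gamma}$, so generically they share only $m-3$ directions; summing over those misses one direction of the trace and minimality buys nothing. (A symptom that something is wrong: your Case 2 never uses the hypothesis $\chi(F)\ne 0$, so if valid it would prove far more than the theorem.) This dimension deficit is precisely what the paper's proof is organized around: it shows that if the parallel-transported tangent space were \emph{fully} contained in $T_qF_1$, the summed index form would be negative, contradicting minimality of $\gamma$; hence the intersection has dimension exactly $m-3$, which yields a continuous codimension-one distribution on $F_1$ and forces $\chi(F_1)=0$. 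To make that distribution well defined and continuous one needs the two minimal fibers to lie within the normal injectivity radius of each other, which is why the paper first performs the fiberwise conformal change $\hat g=(\lambda^{2/k}g_v)\oplus g_h$ making \emph{all} fibers minimal while preserving positive curvature near the fiber where $\lambda$ is maximal; your two minimal fibers $F_{\max}$ and $F_{\min}$ may be far apart. Your derivation of $\omega=df$ with $f$ basic (whether via $\bar\omega=-d\log V$ or via Bochner together with Proposition \ref{induce}) is fine and agrees with the paper; the failure is entirely in the final intersection step.
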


\begin{Remark} If Conjecture $1$ is true, then there would be no Riemannian submersion  $\pi: (M^m,g) \rightarrow (N^2, h)$, where 
 $(M^m,g)$ has  positive sectional curvature and  $m \geq 4$.
\end{Remark}

Before we prove Theorem \ref{basic}, we firstly show how to derive Theorem \ref{positive}. The proof is by contradiction.
Suppose there exists a nontrivial Riemannian submersion $\pi: (M^4,g)\rightarrow (N,h)$ such that either $\|A\|$ or $H$ is basic,
where $(M^4,g)$ is a compact four manifold with positive sectional curvature.
Since $(M^4,g)$ has positive sectional curvature, $H^1(M^4, \mathbb{R})=0$ by Bochner's vanishing theorem (\cite{Pet}, page $208$).
By Poincar$\acute{e}$ duality, $\chi(M^4)=2+b_2(M^4)$ is positive.
By a theorem of Hermann \cite{hermann1960sufficient}, $\pi$ is a locally trivial fibration.
Then $\chi (M^4)= \chi (N) \chi (F)$, where $F$ is any fiber of $\pi$.
It follows that $dim (N)=2$ and $\chi(F)$ is nonzero (hence all fibers have nonzero Euler numbers), which is a contradiction by
Theorem \ref{basic}.

\vskip0.1in

The  proof of  Theorem \ref{basic} is again by contradiction.  Suppose  $\pi: (M^m,g) \rightarrow (N^2, h)$ be a  Riemannian submersion
satisfying the conditions in Theorem \ref{basic}.
 By passing to its oriented double cover, we can assume that
  $N^2$ is oriented. The idea of the proof of  Theorem \ref{basic} is to construct a nowhere vanishing vector field (or line field)
  on some fiber of $\pi$,  which will  imply the Euler  numbers of the fibers are zero. Contradiction.

\vskip0.1in

 Since $(M,g)$ has positive sectional curvature, by a theorem of Walschap \cite{walschap1992metric},
  $\|A\|$ can not be identical to zero on $M$. Hence there exists $p\in M$ such that
 $\|A\|(p)\neq 0$.

\vskip0.1in

 If $\|A\|$ is basic, then $\|A\| \neq 0$ at any point on $F_p$, where $F_p$ is the fiber at $p$.
Let $X, Y$ be any orthonormal oriented basic vector fields in some open neighborhood of $F_p$. 
Then $\|A_XY\|^2=\frac{1}{2}\|A\|^2 \neq 0$ at any point on $F_p$.   Define a map $s$ by
$$s: F_p \rightarrow TF_p$$
$$ x \mapsto \frac{A_X Y}{\|A_X Y\|}(x).$$
Let $Z, W$ be another orthonormal oriented basic vector fields. Then $Z=a X+ b Y$ and $W= c X + d Y$, $ad-bc > 0$.
Then
$$A_Z W= (ad-bc) A_X Y.$$
Hence $s$ does not depend on the choice of $X, Y$. Then $s$ is a nowhere vanishing vector field on
$F_p$. Thus the Euler number of $F_p$ is zero. Contradiction.

\vskip0.1in

 If $H$ is basic, the construction of such nowhere vanishing vector field (or line field) is much more complicated.
 Under the assumption that $H$ is basic, we firstly construct a metric $\hat{g}$ on $M^m$ such that $\pi: (M^m,\hat{g})\rightarrow (N^2,h)$ is
 still a Riemannian submersion and all fibers are minimal submanifolds with respect to $\hat{g}$.
Of course, in general $\hat{g}$ can $not$ have positive sectional curvature everywhere.
However, the crucial point is that there exists some fiber $F_0$ such that $\hat{g}$ has positive sectional curvature at all points on $F_0$.
Pick any fiber $F_1$ which is close enough to $F_0$. Then using the classical variational argument, we construct a
continuous codimension one distribution on $F_1$. Thus the Euler number of $F_1$ is zero. Contradiction.

\vskip0.1in

 Now we are going to explain the proof of Theorem \ref{basic} in details. We firstly need the following lemmas:
\begin{lem} \label{closed}
Suppose $\omega$ is the mean curvature form of a Riemannian submersion from compact Riemannian manifolds.
If $\omega$ is a basic form, then it is a closed form.
\end{lem}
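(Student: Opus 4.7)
The plan is to show that $\omega$ is in fact \emph{exact} by exhibiting an explicit primitive built from the fiber volume function. Let $\phi: M\to(0,\infty)$ be defined by $\phi(p) = \mathrm{vol}(F_p)$, where $F_p = \pi^{-1}(\pi(p))$. Since $M$ is compact, each fiber is a compact submanifold, so $\phi$ is a smooth, strictly positive function; by construction $\phi$ is constant on each fiber, i.e.\ $\phi$ is basic, and therefore $d\phi$ already vanishes on $\mathcal{V}$. It thus suffices to compare $d\phi(X)$ with $\omega(X)$ for horizontal $X$.

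The central computation is a first variation of volume. Let $X$ be a basic horizontal vector field projecting to $\xi$ on $N$. Because $X$ is $\pi$-related to $\xi$, its flow preserves the fibration and carries $F_p$ diffeomorphically onto a nearby fiber; consequently $X(\phi)(p)$ is the first variation of the volume of $F_p$ under a normal deformation with velocity $X|_{F_p}$. The classical first-variation-of-volume formula then gives
$$
d\phi(X)(p) \;=\; -\int_{F_p} g(H,X)\, dV_{F_p} \;=\; -\int_{F_p} \omega(X)\, dV_{F_p}.
$$

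To turn this integral identity into a pointwise one, I need $\omega(X)$ to be constant along each fiber, and this is exactly where the basic hypothesis on $\omega$ enters. The routine Lie derivative identity, combined with the fact that $[V,X]$ is vertical whenever $V\in\mathcal{V}$ and $X$ is $\pi$-related, gives $V(\omega(X)) = (\mathcal{L}_V\omega)(X) + \omega([V,X]) = 0$ for every vertical $V$, so $\omega(X)$ is basic. The integral therefore collapses to $\omega(X)(p)\,\phi(p)$, producing the pointwise identity $d\phi = -\phi\,\omega$ on horizontal vectors; since both sides vanish on $\mathcal{V}$, this holds on all of $TM$. Dividing by $\phi>0$ then yields $\omega = -d(\log\phi)$, so $\omega$ is exact, and \emph{a fortiori} closed.

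The only delicate point is verifying that $\omega(X)$ is a basic function whenever both $\omega$ and $X$ are basic; this is precisely what allows me to pull $\omega(X)$ out of the first-variation integral. Everything else is routine: smoothness and positivity of $\phi$ follow from compactness of $M$, basic horizontal vector fields span $\mathcal{H}$ locally so the identity $d\phi(X) = -\phi\,\omega(X)$ extends to all horizontal $X$ by linearity over smooth functions, and the final division by $\phi$ is harmless.
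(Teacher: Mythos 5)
Your argument is correct, and it takes a genuinely different route from the paper, which simply refers the reader to Tondeur's book (p.~82), where closedness of a basic mean curvature form is extracted from Rummler's formula $d\chi_{\mathcal F}=-\kappa\wedge\chi_{\mathcal F}+\varphi_0$ for the characteristic form of the foliation. You instead exhibit an explicit basic primitive: the fiber-volume function $\phi(p)=\mathrm{vol}(F_p)$ is smooth (by local triviality of the fibration, which holds since $M$ is compact) and basic, the flow of a basic field carries fibers to fibers so that $X\phi=-\int_{F_p}\omega(X)\,dV$ by the first variation of volume, and the observation that $\omega(X)$ is constant along fibers (which correctly uses both hypotheses: $\mathcal L_V\omega=0$ and $i_{[V,X]}\omega=0$ because $[V,X]$ is vertical for $\pi$-related $X$) collapses the integral to give $\omega=-d\log\phi$. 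The sign conventions match the paper's definition $H=\sum_i(\nabla_{V_i}V_i)^h$, and in any case the sign is irrelevant for closedness. Note that your conclusion is strictly stronger than the lemma: $\omega$ is exact in the basic complex, not merely closed. This buys something concrete downstream --- in Lemma 3.3 the author passes from ``$\omega$ closed'' to ``$\omega=df$ with $f$ basic'' via $H^1_{DR}(M)=0$ (Bochner) and the injectivity of $H^1_b(M)\to H^1_{DR}(M)$; your primitive $-\log\phi$ makes that detour unnecessary and shows the fibers can always be rendered minimal by a basic conformal change whenever $H$ is basic, with no curvature hypothesis. The trade-off is that your argument is specific to foliations with compact leaves (it needs the leaf volume to be finite and to vary smoothly), whereas the cited machinery applies to general Riemannian foliations.
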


\begin{proof}
See page $82$ in \cite{tondeur1997geometry} for a proof.
\end{proof}

\begin{lem} \label{baobao}
Suppose $\pi: (M^m,g)\rightarrow (N,h)$ is a Riemannian submersion such that $H$ is basic, where $(M^m,g)$ is a compact
Riemannian manifold with positive sectional curvature.
Then there exists a metric $\hat{g}$ on $M^m$ such that
$\pi: (M^m,\hat{g})\rightarrow (N,h)$ is still a Riemannian submersion and all fibers are minimal submanifolds
with respect to $\hat{g}$. Furthermore, there exists some fiber $F_0$ such that $\hat{g}$ has
positive sectional curvature at all points on $F_0$.
\end{lem}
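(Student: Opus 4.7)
The plan is to build $\hat g$ as a conformal change of $g$ along the fibers, with the conformal factor basic and chosen so as to kill the mean curvature; the second assertion will then reduce to a sign check at a maximum of the conformal factor. The key input is that the mean curvature $1$-form $\omega = g(H,\cdot)$ is basic and exact. Basicness follows from $H$ being basic: $i_V\omega = 0$ since $H$ is horizontal, and $\mathcal L_V\omega = 0$ since $H$ is preserved under vertical flows. Lemma \ref{closed} then gives that $\omega$ is closed, while positive sectional curvature together with Bochner's vanishing theorem forces $H^1_{DR}(M,\mathbb R) = 0$, so Proposition \ref{induce} yields $H^1_b(M) = 0$. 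Hence $\omega = df$ for a basic function $f$ on $M$; subtracting a constant, assume $\max_M f = 0$.

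Let $k = \dim\mathcal V$, set $\phi = f/k$, and define $\hat g = g$ on $\mathcal H$ and $\hat g = e^{2\phi}g$ on $\mathcal V$, keeping $\mathcal H \perp_{\hat g} \mathcal V$. Since the horizontal metric and the splitting are unchanged, $\pi:(M,\hat g) \to (N,h)$ remains a Riemannian submersion. A direct Koszul-formula calculation (using $V\phi = 0$) yields $\hat H = H - k\,\mathrm{grad}^{\mathcal H}_g\phi$. Because $f$ is basic, its $g$-gradient is horizontal and equals the $g$-dual $H$ of $\omega = df$; with $\phi = f/k$ this gives $\hat H = 0$, so every fiber of $\pi$ is minimal in $(M,\hat g)$.

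For the last assertion, write $f = \tilde f\circ\pi$, let $q_0\in N$ be a maximum point of $\tilde f$, and put $F_0 = \pi^{-1}(q_0)$. Then $\phi$ and $d\phi$ both vanish on $F_0$, so $\hat g$ and $g$ agree as pointwise tensors along $F_0$ and $\hat g-g = (e^{2\phi}-1)\,g|_{\mathcal V}$ has vanishing first derivatives there. Thus the Christoffel symbols of $\hat g$ and $g$ coincide on $F_0$, and the two Riemann tensors differ at a point $p\in F_0$ only through $\mathrm{Hess}_g\phi|_p$, which vanishes on vertical and on mixed pairs (because $\phi$ is basic and $d\phi|_p = 0$) and is negative semidefinite on horizontal pairs (because $\tilde\phi$ has a maximum at $q_0$).

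The main obstacle is then a sign check on the sectional curvatures. In normal coordinates for $g$ at $p$, horizontal-horizontal Riemann components are unchanged because $(\hat g-g)_{ij}$ vanishes identically whenever $i$ or $j$ is horizontal, and vertical-vertical Riemann components are unchanged because $\mathrm{Hess}_g\phi$ vanishes on vertical-vertical pairs. For a mixed $2$-plane $X\wedge V$ the only surviving contribution is
\[
R^{\hat g}(X,V,X,V)\bigr|_p - R^g(X,V,X,V)\bigr|_p = -\,\mathrm{Hess}_g\phi(X,X)\bigr|_p\;g(V,V)\bigr|_p \;\geq\; 0,
\]
so the correction is nonnegative. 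Combined with positivity of $K^g$ this yields $K^{\hat g}>0$ on every $2$-plane at every point of $F_0$, which is what was claimed.
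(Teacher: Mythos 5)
Your construction of $\hat g$ and the proof that the fibers become minimal coincide with the paper's: the paper takes $\hat g=(\lambda^{2/k}g_v)\oplus g_h$ with $\lambda=e^{\hat f}$, which is exactly your $e^{2\phi}g_v$ with $\phi=\hat f/k$, and the exactness of $\omega$ is obtained the same way (Lemma \ref{closed}, Bochner, Proposition \ref{induce}). Your route to the curvature comparison is a legitimate shortcut past the paper's Lemma \ref{conformal}: since $h=\hat g-g=(e^{2\phi}-1)g_v$ and $\nabla h$ both vanish along $F_0$, the Christoffel symbols agree there and $\hat R-R$ at $p\in F_0$ is the linearization of the curvature in $\nabla^2h|_p=2\,\mathrm{Hess}_g\phi\otimes g_v$, which reproduces the paper's correction term
$$-\mathrm{Hess}\,\phi(X,X)\,g(W,W)+2\,\mathrm{Hess}\,\phi(X,Y)\,g(V,W)-\mathrm{Hess}\,\phi(Y,Y)\,g(V,V)$$
for the plane spanned by $X+V$ and $Y+W$.

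The gap is in your final case analysis. A $2$-plane in $\mathcal H_p\oplus\mathcal V_p$ need not be horizontal, vertical, or spanned by a horizontal and a vertical vector: the generic plane is spanned by $X+V$ and $Y+W$ with $X,Y$ and $V,W$ each linearly independent, and such a plane contains no purely horizontal and no purely vertical vector, so none of your three cases applies to it. For these planes the curvature correction is the full three-term expression above, and its nonnegativity is the one genuinely nontrivial sign check of the proof; it does not reduce to the single inequality $-\mathrm{Hess}\,\phi(X,X)\,g(V,V)\ge 0$ you verify. The inequality is true: writing $A$ for the symmetric matrix of $\mathrm{Hess}\,\phi$ on $\mathrm{span}\{X,Y\}$ (negative semidefinite since $\phi$ attains its maximum on $F_0$) and $B$ for the matrix with entries $g(W,W),-g(V,W),-g(V,W),g(V,V)$ (positive semidefinite by Cauchy--Schwarz), the correction equals $\mathrm{tr}(-AB)$, which is nonnegative for a product of two positive semidefinite symmetric matrices even though $-AB$ itself need not be symmetric; equivalently one can combine Cauchy--Schwarz for $-\mathrm{Hess}\,\phi$ and for $g_v$ with the AM--GM inequality. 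This is exactly the step the paper isolates and proves, and it is missing from your argument, so as written your conclusion ``$K^{\hat g}>0$ on every $2$-plane at every point of $F_0$'' is asserted but only verified for a non-exhaustive family of planes.
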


\begin{proof}
The idea is to use partial conformal change of metrics along the fibers, see also page $82$ in \cite{tondeur1997geometry}.
Let $\omega$ be the mean curvature form of $\pi$.
Since $H$ is basic, $\omega$ is a basic form.
Then $\omega$ is closed by Lemma \ref{closed}.
So $[\omega]$ defines a cohomological class in $H^1_b(M^m)$.
Because $(M^m,g)$ has positive sectional curvature,
$H^1_{DR}(M^m)=0$ by Bochner's vanishing theorem (\cite{Pet}, page $208$).
By Proposition \ref{induce}, we see that $H^1_b(M^m)=0$.
Then there exists a basic function
$f$ globally defined on $M^m$ such that $\omega =df$.
Define $\hat{f}=f-max_{p\in M^m} f(p)$. Then $max_{p\in M^m} \hat{f}(p)=0$
and $\omega =d\hat{f}$.
Let $\lambda=e^{\hat{f}}$ and define
$$\hat{g}=(\lambda ^\frac{2}{k}g_v)\oplus g_h,$$
where $k=dim (M^m)-dim (N)$, $g_v$ $/ g_h$ are the vertical $/$ horizontal components of $g$, respectively.

\vskip0.1in

Since the horizontal components of $g$ remains unchanged, $\pi: (M^m,\hat{g})\rightarrow (N,h)$ is still
a Riemannian submersion.
Now the mean curvature form $\hat{\omega}$ associated to
$\hat{g}$ is computed to be
$$\hat{\omega}=\omega - d log \lambda=0.$$
Hence all fibers of $\pi$ are minimal submanifolds with respect to $\hat{g}$.

\vskip0.1in

Let $\phi(p)=\lambda ^\frac{2}{k}(p), p\in M^m.$
Then
$$\hat{g}=(\phi g_v)\oplus g_h.$$
Note for any $p\in M^m$, $0< \phi(p)\leq 1$. Moreover, we have
$max_{p\in M^m}\phi(p)=1$. Let $p_0\in M^m$ such that $\phi(p_0)=1$ and $F_0$ be the fiber of $\pi$ passing through $p_0$.
Since $f$ is a basic function on $M^m$, $\phi$ is also basic. Then $\phi \equiv 1$ on $F_0$, which will play a crucial role in our argument below.
Of course, in general $\hat{g}$ can $not$ have positive sectional curvature everywhere. However,
by Lemma \ref{conformal} below, we see that $\hat{g}$ still has positive sectional curvature at all points on $F_0$.
(The reader should compare it to the following fact: Let $\hat{h}=e^{2f}h$ be a conformal change of $h$, where $h$ is a Riemannian metric on $M$ with positive sectional curvature. 
Then $\hat{h}$ still has positive sectional curvature at those points where $f$ attains its maximum value.)

\vskip0.1in

Indeed, by Lemma \ref{conformal} below, for any basic vector fields $X,Y$ and vertical vector fields $V,W$, we have
$$\hat{K}(X+V, Y+W)\|(X+V)\wedge (Y+W)\|^2=\hat{R}(X+V,Y+W,Y+W,X+V)$$
$$=R(X+V,Y+W,Y+W,X+V)+ (\phi-1) P (\nabla \phi, \phi, X, Y, V, W)$$
$$+ Q(\nabla \phi, \phi, X, Y, V, W)+[-g(W,W)g(\nabla_V \nabla \phi, X)$$
$$+ g(V,W) g( \nabla_W \nabla \phi, X)+ g(V,W) g( \nabla_V \nabla \phi, Y)$$
$$-g(V,V) g( \nabla_W \nabla \phi, Y)]+\frac{1}{2}[-Hess (\phi) (X,X) g(W,W)$$
$$+ 2 Hess (\phi) (X,Y) g(V,W)- Hess(\phi)(Y,Y) g(V,V)],$$
where $\hat{K}(X+V, Y+W)$ is the sectional curvature of the plane spanned by $X+V, Y+W$
with respect to $\hat{g}$ and $$\|(X+V)\wedge (Y+W)\|^2=\hat{g}(X+V, X+V)\hat{g}(Y+W, Y+W)- [\hat{g}(X+V, Y+W)]^2.$$
Moreover, $\nabla $ is the Levi-Civita connection and $Hess (\phi)$
is the Hessian of $\phi$ with respect to $g$.
Also $\hat{R} /R$ are the Riemannian curvature tensors with respect to $\hat{g}/ g$, respectively.
Furthermore, $P(\nabla \phi, \phi, X, Y, V, W)$, $ Q (\nabla \phi, \phi, X, Y, V, W)$ are two functions depending on $\nabla \phi, \phi, X, Y, V, W$
and $Q(0, \phi, X, Y, V, W)\equiv0$ (which will be very important for our purpose).

\vskip0.1in

Since $\phi \equiv 1= max_{p\in M^m} \phi (p)$ on $F_0$, we see that $\nabla \phi \equiv 0$ on $F_0$. Hence
$Q(\nabla \phi, \phi, X, Y, V, W) \equiv Q(0, \phi, X, Y, V, W) \equiv 0$ and $\nabla_V \nabla \phi  \equiv0,
\nabla_W \nabla \phi \equiv 0$ on $F_0$.
Then at any point on $F_0$, we have
$$\hat{R}(X+V,Y+W,Y+W,X+V)=R(X+V,Y+W,Y+W,X+V)$$
$$+\frac{1}{2}[-Hess (\phi) (X,X) g(W,W)+ 2 Hess (\phi) (X,Y) g(V,W)$$
$$- Hess(\phi)(Y,Y) g(V,V)].$$
On the other hand, let
\begin{gather*}
 A=
\begin{pmatrix}
Hess (\phi) (X,X) & Hess (\phi) (X,Y) \\
Hess (\phi) (X,Y) & Hess (\phi)(Y,Y),
\end{pmatrix},
B=
\begin{pmatrix}
g(W,W) & -g(V,W)\\
-g(V,W) & g(V,V)
\end{pmatrix}.
\end{gather*}
Then
$$\hat{R}(X+V,Y+W,Y+W,X+V)=R(X+V,Y+W,Y+W,X+V)+ \frac{1}{2}tr(-AB).$$
Since $\phi$ attains its maximum at any point on $F_0$, we see that $-A$ is nonnegative definite on $F_0$. It is easy to check that $B$ is also nonnegative definite. 
Hence $tr(-AB) \geq 0$ (although $-AB$ is not nonnegative definite if $AB \neq BA$). Since $g$ has positive sectional curvature
everywhere on $M^m$ by assumption, then at any point on $F_0$, we see that
$$\hat{R}(X+V,Y+W,Y+W,X+V) \geq R(X+V,Y+W,Y+W,X+V) >0.$$
Hence $\hat{g}$ still has positive sectional curvature at all points on $F_0$.
\end{proof}

\begin{lem}
\label{conformal}
Let $\pi: (M^m,g)\rightarrow (N,h)$ be a Riemannian submersion and $g=g_v\oplus g_h$, where
$g_v$ $/ g_h$ are the vertical $/$ horizontal components of $g$, respectively.
Suppose $\phi$ is a positive basic function defined on $M^m$.
Let $\hat{g}=(\phi$ $g_v)\oplus g_h$. Suppose $\hat{\nabla}/ \nabla $ are the Levi-Civita connections and
$\hat{R} /R$ are the Riemannian curvature tensors with respect to $\hat{g}/ g$, respectively. Moreover, let $Hess (\phi)$
be the Hessian of $\phi$ with respect to $g$. Then for any horizontal vector fields $X,Y$ ($X,Y$ are not necessarily basic vector fields) and vertical vector fields $V,W$, we have
$$ \hat{\nabla}_X Y=\nabla_X Y.$$
$$ \hat{\nabla}_V W=\nabla_V W-\frac{g(V,W)}{2}\nabla \phi+ (\phi-1)(\nabla_V W)^h.$$
$$ \hat{\nabla}_V X=\nabla_V X + \frac{g(X, \nabla \phi)}{2\phi}V + \frac{1-\phi}{2}\sum_{i=1}^n g([X,\varepsilon_i], V)\varepsilon_i.$$
$$ \hat{\nabla}_X V=\nabla_X V + \frac{g(X, \nabla \phi)}{2\phi}V + \frac{1-\phi}{2}\sum_{i=1}^n g([X,\varepsilon_i], V)\varepsilon_i,$$
where $\{{\varepsilon_i}\}_{i=1}^n$ is any orthonormal basis of the horizontal distribution with respect to $g$ and $n=dim (N)$.

\vskip0.1in

 Moreover, if $X,Y$ are $basic$ vector fields and $V,W$ are vertical vector fields, then
$$\hat{R}(X+V,Y+W,Y+W,X+V)=R(X+V,Y+W,Y+W,X+V)$$
$$+(\phi-1) P (\nabla \phi, \phi, X, Y, V, W) + Q(\nabla \phi, \phi, X, Y, V, W)$$
$$+[-g(W,W)g(\nabla_V \nabla \phi, X) + g(V,W) g( \nabla_W \nabla \phi, X)$$
$$+ g(V,W) g( \nabla_V \nabla \phi, Y) -g(V,V) g( \nabla_W \nabla \phi, Y)]$$
$$+\frac{1}{2}[-Hess (\phi) (X,X) g(W,W)+ 2 Hess (\phi) (X,Y) g(V,W)$$
$$- Hess(\phi)(Y,Y) g(V,V)],$$
where $P(\nabla \phi, \phi, X, Y, V, W), Q (\nabla \phi, \phi, X, Y, V, W)$ are two functions which depend on $\nabla \phi, \phi, X, Y, V, W$
and $Q(0, \phi, X, Y, V, W)\equiv0$.
\end{lem}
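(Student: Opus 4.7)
\textbf{Proof plan for Lemma \ref{conformal}.}

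The plan is first to derive the four connection formulas from Koszul's identity, and then to substitute them into the definition of the curvature tensor to produce the stated expression for $\hat{R}$.

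For the connection formulas, I would apply Koszul's identity
\[
2\hat{g}(\hat{\nabla}_A B, C) = A\hat{g}(B,C) + B\hat{g}(A,C) - C\hat{g}(A,B) + \hat{g}([A,B],C) - \hat{g}([A,C],B) - \hat{g}([B,C],A),
\]
letting $A,B$ run through the four combinations of horizontal and vertical inputs and probing against $C$ horizontal and $C$ vertical separately. The three simplifying ingredients are: (i) $\hat{g}$ agrees with $g$ on horizontal vectors, so that pairings with a horizontal test vector $C$ reproduce the untwisted Koszul identity for $\nabla$; (ii) $\hat{g}|_{\mathcal V}=\phi\, g|_{\mathcal V}$, so that pairings with vertical $C$ pick up a factor of $\phi$ together with an extra derivative term coming from $A\phi$ or $B\phi$; and (iii) $\phi$ is basic, so that $V\phi=0$ for vertical $V$ and consequently $\nabla\phi$ is horizontal. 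Combined with O'Neill's identity $A_XY=\tfrac{1}{2}[X,Y]^v$ for horizontal $X,Y$ and the standard vertical/horizontal decomposition of $[X,V]$ in terms of $A^*$ and the second fundamental form, these ingredients produce the four displayed expressions for $\hat{\nabla}$.

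Next, I would substitute the connection formulas into
\[
\hat{R}(X+V,Y+W)(Y+W) = \hat{\nabla}_{X+V}\hat{\nabla}_{Y+W}(Y+W) - \hat{\nabla}_{Y+W}\hat{\nabla}_{X+V}(Y+W) - \hat{\nabla}_{[X+V,Y+W]}(Y+W),
\]
and pair with $X+V$ under $\hat{g}$. Because $X,Y$ are basic and $V,W$ are vertical, the brackets $[X,V]$, $[X,Y]$ and $[V,W]$ decompose cleanly into horizontal and vertical parts with known O'Neill contributions. Expanding and regrouping by type yields five families of terms: those that recombine into $R(X+V,Y+W,Y+W,X+V)$; those linear in $\phi-1$ and polynomial in $\nabla\phi,\phi,X,Y,V,W$, which I would absorb into $P$; those that survive even when $\phi=1$ but vanish whenever $\nabla\phi=0$, which I would absorb into $Q$; the four cross terms $g(\nabla_V\nabla\phi,X)$, $g(\nabla_W\nabla\phi,X)$, etc.; and the four symmetrized Hessian terms in $X,Y$ paired against $V,W$.

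The principal obstacle is book-keeping. The expansion produces a large number of terms involving $\phi$, $\phi^{-1}$, $\nabla\phi$, $\mathrm{Hess}\,\phi$ and O'Neill-type tensors, and the delicate step is verifying that every term not of pure curvature or pure Hessian type can be split into a piece divisible by $\phi-1$ (absorbed into $P$) and a piece containing $\nabla\phi$ as an explicit factor (absorbed into $Q$). Establishing this factorization is exactly the content of the assertion $Q(0,\phi,X,Y,V,W)\equiv 0$, and it is what will make the lemma usable on the fiber $F_0$, where $\nabla\phi\equiv 0$. Once this factorization is checked, the residual terms that survive at points where $\nabla\phi=0$ are exactly the Hessian contributions written in the statement, and the claimed formula follows.
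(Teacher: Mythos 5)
Your plan follows essentially the same route as the paper: Koszul's formula (exploiting $\hat{g}_h=g_h$, $\hat{g}_v=\phi\, g_v$, and $V\phi=0$) to get the four connection identities, then a multilinear expansion of $\hat{R}(X+V,Y+W,Y+W,X+V)$ into its component curvature terms, with the paper's trick of rewriting $\phi=1+(\phi-1)$ to sort contributions into the $(\phi-1)P$ part, the $Q$ part vanishing with $\nabla\phi$, and the residual Hessian and $g(\nabla_V\nabla\phi,X)$-type terms. You correctly isolate the one delicate point (the factorization guaranteeing $Q(0,\phi,X,Y,V,W)\equiv 0$), which the paper itself verifies only for representative terms and asserts for the rest "by the similar argument," so your outline is faithful to the actual proof.
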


\begin{proof}
The proof is based on a lengthy computation and the following $Koszul's formula$:
$$2 \hat{g}(\hat{\nabla}_X Y, Z)=X (\hat{g}(Y,Z))+ Y (\hat{g}(Z,X))- Z (\hat{g}(X,Y))$$
$$+ \hat{g}([X,Y], Z)-\hat{g}([Y,Z],X)-\hat{g}([X,Z],Y).$$
We just prove the fourth-fifth equalities in Lemma \ref{conformal}, others are left to the readers.
In the computation below, we will use the following trick very often: If we encounter with anything like $\phi X$, we will rewrite
$\phi X= X + (\phi-1) X$. By rewriting it in this way, we can compare new curvature terms with odd terms. We  will also use the fact that $\phi$ is a basic function very often.

\vskip0.1in

Now let $X,Y$ be horizontal vector fields (not necessarily basic) and $\{{\varepsilon_i}\}_{i=1}^n$ be any orthonormal basis of the horizontal distribution with respect to $g$.
By $Koszul's formula$, we see that
$$2 \hat{g}(\hat{\nabla}_X V, \varepsilon_i)=X (\hat{g}(V,\varepsilon_i))+ V(\hat{g}(\varepsilon_i,X))-  \varepsilon_i(\hat{g}(X,V))$$
$$+ \hat{g}([X,V], \varepsilon_i)-\hat{g}([V,\varepsilon_i],X)-\hat{g}([X,\varepsilon_i],V)$$
$$=V (\hat{g}(\varepsilon_i,X))+ \hat{g}([X,V], \varepsilon_i)-\hat{g}([V,\varepsilon_i],X)-\hat{g}([X,\varepsilon_i],V).$$
Since $\hat{g}_h=g_h$ and $\hat{g}_v=\phi g_v$, we get
$$2 g(\hat{\nabla}_X V, \varepsilon_i)=V g(\varepsilon_i,X))+ g([X,V], \varepsilon_i)-g([V,\varepsilon_i],X)-\phi g([X,\varepsilon_i],V)$$
$$=V g(\varepsilon_i,X))+ g([X,V], \varepsilon_i)-g([V,\varepsilon_i],X)- g([X,\varepsilon_i],V)+ (1-\phi) g([X,\varepsilon_i],V).$$
By $Koszul's formula$ again, we see that
$$2 g(\nabla_X V, \varepsilon_i)=V g(\varepsilon_i,X))+ g([X,V], \varepsilon_i)-g([V,\varepsilon_i],X)- g([X,\varepsilon_i],V).$$
Then
$$2 g(\hat{\nabla}_X V, \varepsilon_i)=2 g(\nabla_X V, \varepsilon_i)+(1-\phi) g([X,\varepsilon_i],V).$$
Hence
$$(\hat{\nabla}_X V)^h=(\nabla_X V)^h + \frac{1-\phi}{2}\sum_{i=1}^n g([X,\varepsilon_i], V)\varepsilon_i.$$
Note that $\frac{1-\phi}{2}\sum_{i=1}^n g([X,\varepsilon_i], V)\varepsilon_i$ does not depend on the choice of $\{{\varepsilon_i}\}_{i=1}^n$.
By the similar argument above, we see that
$$(\hat{\nabla}_X V)^v=(\nabla_X V)^v + \frac{g(X, \nabla \phi)}{2\phi}V.$$
Hence
$$\hat{\nabla}_X V=\nabla_X V + \frac{g(X, \nabla \phi)}{2\phi}V + \frac{1-\phi}{2}\sum_{i=1}^n g([X,\varepsilon_i], V)\varepsilon_i.$$
The similar argument will also establish the first-third equalities in Lemma \ref{conformal}. We just mention that
in the proof of these equalities, the fact that $\phi$ is a basic function and hence $V \phi =0$ will be used very often.

\vskip0.1in

Now we are going to prove the fifth equality in  Lemma \ref{conformal}. In the following we always assume that $X,Y$ are basic vector fields. First of all, we have
$$\hat{R}(X+V,Y+W,Y+W,X+V)=\hat{R}(X,Y,Y,X)+\hat{R}(V,W,W,V)$$
$$+\hat{R}(X,W,W,X)+\hat{R}(Y,V,V,Y)+2\hat{R}(X,Y,Y,V)+2\hat{R}(Y,X,X,W)$$
$$+2\hat{R}(X,Y,W,V)+2\hat{R}(X,W,Y,V)+2\hat{R}(V,W,W,X)+2\hat{R}(W,V,V,Y).$$
Since $\hat{g}_h=g_h$, $(M^m, \hat{g})\rightarrow (N,h)$ is still a Riemannian submersion. Then by O'Neill's formula \cite{O'Neil66}, we have
$$I_0=\hat{R}(X,Y,Y,X)=R_N(X,Y,Y,X)-\frac{3}{4}\hat{g}([X,Y]^v,[X,Y]^v)$$
$$=R_N(X,Y,Y,X)-\frac{3}{4}g([X,Y]^v,[X,Y]^v)+\frac{3}{4}(1-\phi)g([X,Y]^v,[X,Y]^v)$$
$$=R(X,Y,Y,X)+\frac{3}{4}(1-\phi)g([X,Y]^v,[X,Y]^v),$$
where $R_N$ is the Riemannian curvature tensor of $(N,h)$.
On the other hand, by the first-fourth equalities in  Lemma \ref{conformal},
$$I_1=\hat{R}(V,W,W,V)=\hat{g}(\hat{\nabla}_V\hat{\nabla}_W W-\hat{\nabla}_W\hat{\nabla}_V W- \hat{\nabla}_{[V,W]}W, V)$$
$$=\phi g(\hat{\nabla}_V [\nabla_W W - \frac{1}{2}g(W,W)\nabla \phi + (\phi-1) (\nabla_W W)^h], V)$$
$$-\phi g(\hat{\nabla}_W [\nabla_V W - \frac{1}{2}g(V,W)\nabla \phi + (\phi-1) (\nabla_V W)^h], V)$$
$$-\phi g(\nabla_{[V,W]}W - \frac{1}{2}g([V,W],W)\nabla \phi + (\phi-1) (\nabla_{[V, W]}W)^h, V)$$
$$=\phi g(\hat{\nabla}_V (\nabla_W W)- \hat{\nabla}_W (\nabla_V W)-\nabla_{[V,W]}W, V)$$
$$- \frac{1}{2}\phi [g(W,W) g(\hat{\nabla}_V \nabla \phi, V)-g(V,W) g(\hat{\nabla}_W \nabla \phi, V)]$$
$$+(\phi-1) \tilde{P_1}(\nabla \phi, \phi, X, Y, V, W) + \tilde{Q_1}(\nabla \phi, \phi, X, Y, V, W).$$
$$=\phi g(\hat{\nabla}_V (\nabla_W W)^v + \hat{\nabla}_V(\nabla_W W)^h, V)-\phi g(\hat{\nabla}_W (\nabla_V W)^v + \hat{\nabla}_W(\nabla_V W)^h, V)$$
$$- \phi g(\nabla_{[V,W]}W, V)- \frac{1}{2}\phi [g(W,W) g(\hat{\nabla}_V \nabla \phi, V)-g(V,W) g(\hat{\nabla}_W \nabla \phi, V)]$$
$$+(\phi-1) \tilde{P_1}(\nabla \phi, \phi, X, Y, V, W) + \tilde{Q_1}(\nabla \phi, \phi, X, Y, V, W).$$
Since $\phi$ is a basic function, $g(\nabla \phi, V)=V \phi=0$. Hence $\nabla \phi$ is a horizontal vector field. Then by the first-fourth equalities in  Lemma \ref{conformal},
 we see that
 $$g(\hat{\nabla}_W \nabla \phi, V)=-g(\nabla_W V, \nabla \phi)+\frac{g(\nabla \phi, \nabla \phi)}{2 \phi}g(W,V),$$
and
$$I_1=\hat{R}(V,W,W,V)=\phi R(V,W,W,V)$$
$$+ (\phi-1) \check{P_1} (\nabla \phi, \phi, X, Y, V, W) + \check{Q_1}(\nabla \phi, \phi, X, Y, V, W)$$
$$=R(V,W,W,V)+(\phi-1)R(V,W,W,V)$$
$$+ (\phi-1) \check{P_1}(\nabla \phi, \phi, X, Y, V, W) +\check{Q_1}(\nabla \phi, \phi, X, Y, V, W)$$
$$=R(V,W,W,V)+(\phi-1) P_1 (\nabla \phi, \phi, X, Y, V, W) + Q_1(\nabla \phi, \phi, X, Y, V, W),$$
where $P_1(\nabla \phi, \phi, X, Y, V, W), Q_1 (\nabla \phi, \phi, X, Y, V, W)$ are two functions depending on $\nabla \phi, \phi, X, Y, V, W$
and $Q_1(0, \phi, X, Y, V, W)\equiv0$.

\vskip0.1in

 Since $X$ is a basic vector field, $[X,W]$ is vertical. Hence by the first-fourth equalities in  Lemma \ref{conformal},
$$I_2=\hat{R}(X,W,W,X)=\hat {g}(\hat{\nabla}_X \hat{\nabla}_W W- \hat{\nabla}_W \hat{\nabla}_X W- \hat{\nabla}_{[X,W]}W, X)$$
$$=g(\hat{\nabla}_X[\nabla_W W - \frac{1}{2}g(W,W)\nabla \phi + (\phi-1) (\nabla_W W)^h], X)$$
$$-g(\hat{\nabla}_W[\nabla_X W+\frac{g(X, \nabla \phi)}{2\phi}W + \frac{1-\phi}{2}\sum_{i=1}^n g([X,\varepsilon_i], W)\varepsilon_i], X)$$
$$-g(\nabla_{[X,W]}W - \frac{1}{2}g([X,W],W)\nabla \phi + (\phi-1) (\nabla_{[X, W]}W)^h, X)$$
$$= R(X,W,W,X)+ (\phi-1) P_2 (\nabla \phi, \phi, X, Y, V, W)$$
$$+ Q_2(\nabla \phi, \phi, X, Y, V, W)-\frac{1}{2} Hess (\phi)(X,X)g(W,W),$$
where $P_2(\nabla \phi, \phi, X, Y, V, W), Q_2 (\nabla \phi, \phi, X, Y, V, W)$ are two functions depending on $\nabla \phi, \phi, X, Y, V, W$
and $Q_2(0, \phi, X, Y, V, W)\equiv0$.

\vskip0.1in

 By the similar argument, we see that
$$I_3=\hat{R}(Y,V,V,Y)=R(Y,V,V,Y)+(\phi-1) P_3 (\nabla \phi, \phi, X, Y, V, W)$$
$$+ Q_3(\nabla \phi, \phi, X, Y, V, W)-\frac{1}{2} Hess (\phi)(Y,Y)g(V,V).$$
$$I_4=\hat{R}(X,Y,Y,V)=R(X,Y,Y,V)+(\phi-1) P_4 (\nabla \phi, \phi, X, Y, V, W)$$
$$+ Q_4(\nabla \phi, \phi, X, Y, V, W).$$
$$I_5=\hat{R}(Y,X,X,W)=R(Y,X,X,W)+(\phi-1) P_5 (\nabla \phi, \phi, X, Y, V, W)$$
$$+ Q_5(\nabla \phi, \phi, X, Y, V, W).$$
$$I_6=\hat{R}(X,Y,W,V)=R(X,Y,W,V)+(\phi-1) P_6(\nabla \phi, \phi, X, Y, V, W)$$
$$+ Q_6(\nabla \phi, \phi, X, Y, V, W).$$
$$I_7=\hat{R}(X,W,Y,V)=R(X,W,Y,V)+(\phi-1) P_7 (\nabla \phi, \phi, X, Y, V, W)$$
$$+ Q_7(\nabla \phi, \phi, X, Y, V, W)+\frac{1}{2} Hess(\phi)(X,Y)g(V,W).$$
$$I_8=\hat{R}(V,W,W,X)=R(V,W,W,X)+(\phi-1) P_8 (\nabla \phi, \phi, X, Y, V, W)$$
$$+ Q_8(\nabla \phi, \phi, X, Y, V, W)+\frac{1}{2}g(V,W)g(\nabla_W \nabla \phi, X)-\frac{1}{2}g(W,W)g(\nabla_V \nabla \phi, X).$$
$$I_9=\hat{R}(W,V,V,Y)=R(W,V,V,Y)+(\phi-1) P_9 (\nabla \phi, \phi, X, Y, V, W)$$
$$+ Q_9(\nabla \phi, \phi, X, Y, V, W)+\frac{1}{2}g(V,W)g(\nabla_V \nabla \phi, Y)-\frac{1}{2}g(V,V)g(\nabla_W \nabla \phi, Y),$$
where $P_i(\nabla \phi, \phi, X, Y, V, W), Q_i(\nabla \phi, \phi, X, Y, V, W)$ are two functions depending on $\nabla \phi, \phi, X, Y, V, W$
and $Q_i(0, \phi, X, Y, V, W)\equiv0$, $i=3,4,\cdots 9.$
Hence
$$\hat{R}(X+V,Y+W,Y+W,X+V)=I_0+I_1+I_2+I_3+ 2\sum_{i=4}^9I_i$$
$$=R(X+V,Y+W,Y+W,X+V)+(\phi-1) P (\nabla \phi, \phi, X, Y, V, W)$$
$$ + Q(\nabla \phi, \phi, X, Y, V, W)+[-g(W,W)g(\nabla_V \nabla \phi, X) + g(V,W) g( \nabla_W \nabla \phi, X)$$
$$+ g(V,W) g( \nabla_V \nabla \phi, Y) -g(V,V) g( \nabla_W \nabla \phi, Y)]$$
$$+\frac{1}{2}[-Hess (\phi) (X,X) g(W,W)+ 2 Hess (\phi) (X,Y) g(V,W)$$
$$- Hess(\phi)(Y,Y) g(V,V)],$$
where $P(\nabla \phi, \phi, X, Y, V, W), Q (\nabla \phi, \phi, X, Y, V, W)$ are two functions which depend on $\nabla \phi, \phi, X, Y, V, W$
and $Q(0, \phi, X, Y, V, W)\equiv0$.
\end{proof}

\noindent \textbf{Proof of Theorem \ref{basic}:}
\begin{proof}
We prove it by contradiction. We already proved it if $\|A\|$ is basic. Hence it suffices to show it if $H$ is basic. We prove it by contradiction.
Let  $\pi: (M^m,g) \rightarrow (N^2, h)$ be a  Riemannian submersion such that $H$ is basic and the fibers have nonzero Euler numbers, where $(M^m,g)$ has  positive sectional curvature and  $m \geq 4$.
By Lemma \ref{baobao}, there exists a metric $\hat{g}$ on $M^m$ such that
$\pi: (M^m,\hat{g})\rightarrow (N^2,h)$ is still a Riemannian submersion and
all fibers of $\pi$ are minimal submanifolds with respect to $\hat{g}$.
Furthermore, there exists some fiber $F_0$ such that $\hat{g}$
has positive sectional curvature at all points in $F_0$.
Let $r$ be a fixed positive number such that the normal exponential map of $F_0$ is a
diffeomorphism when restricted to the tubular neighborhood of $F_0$ with radius $r$.
By continuity of sectional curvature, there exists $\epsilon$, $0<\epsilon <r$ such that
$\hat{g}$ has positive sectional curvature at the $\epsilon$ neighborhood of $F_0$.
Choose another fiber $F_1$ such that $0<\hat{d}(F_0,F_1)<\epsilon$, where $\hat{d}(F_0,F_1)$ is the distance between $F_0$ and $F_1$ with respect to
$\hat{g}$. Since $\pi: (M^m,\hat{g})\rightarrow (N^2,h)$ is a Riemannian submersion, $F_0$ and $F_1$ are equidistant. On the other
hand, since $0<\hat{d}(F_0,F_1)<\epsilon$, then
for any point $q\in F_1$, there is a $unique$ point $p\in F_0$ such that $\hat{d}(p,q)=\hat{d}(F_0, F_1)$. Let $L=\hat{d}(p,q)$
and $\gamma: [0,L]\rightarrow M^m, \gamma (0)=p, \gamma(L)=q$ be the $unique$ minimal geodesic with unit speed
realizing the distance between $p$ and $q$.
Let $V\subseteq T_q (M^m)$ be the subspace of vectors $v=X(L)$ where $X$ is a parallel field along
 $\gamma$ such that $X(0)\in T_p(F_0)$. Then
 $$dim (V \cap T_q(F_1))=dim (V) + dim (T_q(F_1)) - dim (V + T_q(F_1))$$
 $$\geq (m-2)+(m-2)-(m-1)=m-3.$$
 We claim that $dim (V \cap T_q(F_1))=m-3$. If not, then $dim (V \cap T_q(F_1))=m-2.$
 Let $X_i, i=1,\cdots m-2,$ be orthonormal parallel fields along
 $\gamma$ such that $X_i(0)\in T_p(F_0), X_i(L)\in T_q(F_1)$. For each $i$, choose a variation $f_i (s,t)$ of $\gamma$
  such that $f_i(s,0)\in F_0, f_i(s,L)\in F_1$ for small $s$ and
  $\frac{\partial f_i(0,t)}{\partial s} =X_i(t)$.
 By construction, $\dot{X_i}(t)=\hat{\nabla}_{\dot{\gamma}}X_i(t)=0$ for all $t$, where $\hat{\nabla}$ is
 the Levi-Civita connection with respect to $\hat{g}$.
 By the second variation formula, for $i=1,\cdots m-2,$ we have
 $$\frac{1}{2}\frac{d^2 E_i(s)}{ds^2}_{|s=0}=\int_0^L ( \hat{g}(\dot{X_i}, \dot{X_i})- \hat{R}(X_i,\dot{\gamma},\dot{\gamma},X_i))dt$$
 $$+ \hat{g}(\hat{B}_1(X_i,X_i), \dot{\gamma})(L)-\hat{g}(\hat{B}_0(X_i,X_i), \dot{\gamma})(0)$$
 $$=-\int_0^L \hat{R}(X_i,\dot{\gamma},\dot{\gamma},X_i)dt
 + \hat{g}(\hat{B}_1(X_i,X_i), \dot{\gamma})(L)-\hat{g}(\hat{B}_0(X_i,X_i), \dot{\gamma})(0),$$
 where $E_i(s)=\int_0^L \hat{g}(\frac{\partial f_i(s,t)}{\partial t}, \frac{\partial f_i(s,t)}{\partial t}) dt $,
 $\hat{R}$ is the curvature tensor of $\hat{g}$ and $\hat{B}_j$ is the second fundamental form of $F_j$
 with respect to $\hat{g}$, $j=0,1$.

\vskip0.1in

 Since $F_0$ and $F_1$ are minimal submanifolds in $(M^m,\hat{g})$, we have
 $$\sum_{i=1}^{m-2}\hat{B}_j(X_i,X_i)=0, j=0,1.$$
 Then
 $$\frac{1}{2}\sum_{i=1}^{m-2}\frac{d^2 E_i(s)}{ds^2}_{|s=0}=-\sum_{i=1}^{m-2}\int_0^L \hat{R}(X_i,\dot{\gamma},\dot{\gamma}, X_i)dt.$$
 Since $\hat{g}$ has positive sectional curvature at the $\epsilon$ neighborhood of $F_0$ and $0<\hat{d}(F_0,F_1)<\epsilon$,
 we see that
 $\hat{R}(X_i,\dot{\gamma},\dot{\gamma}, X_i)<0.$
 Hence
 $$\frac{1}{2}\sum_{i=1}^{m-2}\frac{d^2 E_i(s)}{ds^2}_{|s=0}<0.$$
Then there exists some $i_0$ such that $\frac{d^2 E_{i_0}(s)}{ds^2}_{|s=0}<0$, which contradicts that
 $\gamma$ is a minimal geodesic realizing the distance between $F_0$ and $F_1$.
 So $dim (V \cap T_q(F_1))=m-3$. Since $dim (T_q(F_1))=m-2$, then
 $V \cap T_q(F_1)$ is a codimension one subspace of $T_q(F_1)$.
 Since $q$ is arbitrary on $F_1$, by doing the same construction as above for any $q$,
 then we get a continuous codimension one distribution on $F_1$.
Thus the Euler number of $F_1$ is zero. Contradiction.
 \end{proof}

\section{Proof of theorem \ref{Einstein}}

In this section we prove Theorem \ref{Einstein}.
Suppose $\pi:(M^4,g)\rightarrow (N^2,h)$ is a Riemannian submersion with totally geodesic fibers, where
$(M^4,g)$ is a compact four-dimensional Einstein manifold. We are going to show that the $A$ tensor of $\pi$ vanishes and then
locally $\pi$ is the projection of a metric product onto one of the factors. We firstly need the following lemmas:

\begin{lem}\label{isofiber}
Let  $\pi$ be a Riemannian submersion with totally geodesic fibers from compact Riemannian manifolds, then all fibers are isometric to each other.
\end{lem}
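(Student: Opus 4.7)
The plan is to build isometries between any two fibers by horizontal-lift holonomy along curves in the base, and to use the totally geodesic condition to show these holonomy maps preserve vertical lengths. Since $M$ is compact it is complete, and by Hermann's theorem $\pi$ is a locally trivial fiber bundle with globally defined horizontal lifts. Thus for any smooth curve $c:[0,1]\to N$ with $c(0)=x$, $c(1)=y$ and any $p\in F_x$, the unique horizontal lift $\tilde c_p$ of $c$ starting at $p$ is defined on all of $[0,1]$, giving a smooth map $\phi_c:F_x\to F_y$ by $\phi_c(p)=\tilde c_p(1)$ with smooth inverse $\phi_{c^{-1}}$. Assuming $N$ connected (otherwise work component by component) and noting $\phi_{c_1\ast c_2}=\phi_{c_2}\circ\phi_{c_1}$, it suffices to prove $\phi_c$ is an isometry when $c$ is a geodesic in $N$.

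For such a $c$, its horizontal lift $\tilde c_p$ is a geodesic in $M$ (horizontal lifts of geodesics in the base are geodesics). Given $v\in T_pF_x$, choose $\alpha:(-\epsilon,\epsilon)\to F_x$ with $\alpha'(0)=v$ and form the variation $f(s,t)=\tilde c_{\alpha(s)}(t)$ through horizontal geodesics. Then $J(t):=\partial_s f(s,t)|_{s=0}$ is a Jacobi field along $\tilde c_p$ with $J(0)=v$ and $J(1)=d\phi_c(v)$. Since every $\tilde c_{\alpha(s)}$ projects to the same curve $c$, we have $\pi_\ast J(t)=0$, so $J$ is vertical along the entire horizontal geodesic $\tilde c_p$.

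The crux is to show $|J(t)|$ is constant. Writing $T=\tilde c_p{}'$ and using $[J,T]=0$ from the coordinate variation, one has $\nabla_TJ=\nabla_JT$. The totally geodesic hypothesis is exactly the vanishing of O'Neill's $T$-tensor, so the vertical component $(\nabla_JT)^v=T_JT$ vanishes; hence $\nabla_TJ=\nabla_JT$ is purely horizontal along $\tilde c_p$. Pairing with the vertical field $J$ gives $\langle\nabla_TJ,J\rangle=0$, so $\tfrac{d}{dt}|J|^2=0$ and $|J(1)|=|J(0)|=|v|$. Therefore $|d\phi_c(v)|=|v|$, and $\phi_c$ is an isometry.

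I expect the main obstacle to lie in the Jacobi-field step: one must verify carefully that $J$ is genuinely vertical for every $t$ (which follows from all lifts $\tilde c_{\alpha(s)}$ projecting to the same $c$), and that the totally geodesic hypothesis translates precisely into $T_JT=0$ in O'Neill's convention so that the horizontal-vertical pairing collapses as claimed. Once this is in place, chaining $\phi_c$ over geodesic segments joining any two points of $N$ yields an isometry between arbitrary fibers, completing the proof.
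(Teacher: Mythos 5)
Your argument is correct. Note that the paper does not actually prove this lemma: it simply cites Hermann's 1960 paper, where the statement (holonomy diffeomorphisms of a Riemannian submersion with totally geodesic fibers are isometries) is established. What you have written is essentially the standard proof of that cited result: horizontal-lift holonomy $\phi_c$ between fibers, the variation field $J=\partial_s f$ being vertical because all lifts project to the same base curve, and the identity $(\nabla_J T)^v=T_JT=0$ (with $T$ horizontal, $J$ vertical, and O'Neill's $T$-tensor vanishing by the totally geodesic hypothesis) forcing $\nabla_TJ=\nabla_JT$ to be horizontal, whence $\tfrac{d}{dt}\vert J\vert^2=2\langle\nabla_TJ,J\rangle=0$. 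One small remark: the reduction to geodesics in $N$ and the Jacobi-field language are unnecessary. Your computation only uses $[J,T]=0$, the verticality of $J$, the horizontality of $T$, and $T_JT=0$, all of which hold for the horizontal lift of an arbitrary curve in $N$ (Hermann's completeness argument guarantees the lifts exist on all of $[0,1]$); so you can run the same argument for any curve and skip the piecewise-geodesic chaining.
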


\begin{proof}

See \cite{hermann1960sufficient}.

\end{proof}

\begin{lem} \label{besse}
Suppose $\pi:(M^4,g)\rightarrow (N^2,h)$ is a Riemannian submersion with totally geodesic fibers, where
$(M^4,g)$ is a compact four-dimensional Einstein manifold.
Let $c_1$, $c_2$ be the sectional curvature of $(F^2,g_{|F^2})$ and $(N^2,h)$, respectively, where
$g_{|F^2}$ is the restriction of $g$ to the fibers $F^2$.
Let $Ric (g)=\lambda g$ for some $\lambda$.
Then

\vskip0.1in

$(i)$  $2c_1+\|A\|^2=2\lambda;$

\vskip0.1in

$(ii)$  $2c_2 \circ \pi -2\|A\|^2=2\lambda;$

\vskip0.1in

$(iii)$ $\|A\|^2=\frac{2}{3}(c_2 \circ \pi -c_1),$

\vskip0.1in

where $\|A\|^2=\|A^*_XU\|^2+\|A^*_XV\|^2+\|A^*_YU\|^2+\|A^*_YV\|^2$.
Here $X,Y/ U,V$ is an orthonormal basis of $\mathcal{H}/\mathcal{V}$, respectively.

\end{lem}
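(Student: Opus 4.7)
The plan is to express the Ricci curvature of $(M^4, g)$ along a local orthonormal frame $\{X, Y, U, V\}$ adapted to the splitting $TM = \mathcal{H} \oplus \mathcal{V}$ (with $X, Y$ horizontal and $U, V$ vertical), using O'Neill's sectional-curvature formulas for a Riemannian submersion with totally geodesic fibers, and then contract against the Einstein equation $Ric(g) = \lambda g$ in two complementary ways.

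Since the fibers are totally geodesic, O'Neill's $T$-tensor vanishes, and his formulas specialize to
\[
K(U, V) = c_1, \qquad K(X, V) = \|A_X^* V\|^2, \qquad K(X, Y) = c_2 \circ \pi - 3\|A_X Y\|^2,
\]
for orthonormal pairs; the first equality holds because a totally geodesic inclusion preserves the sectional curvature of a $2$-dimensional fiber. Before computing Ricci, I would verify the identity
\[
\|A_X^* U\|^2 + \|A_X^* V\|^2 + \|A_Y^* U\|^2 + \|A_Y^* V\|^2 = 2\|A_X Y\|^2,
\]
which follows from the duality $\langle A_X^* W, Z\rangle = \langle W, A_X Z\rangle$ together with $A_X X = 0$ and $A_X Y = -A_Y X$. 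In particular the quantity $\|A\|^2$ as defined in the present lemma coincides with the earlier definition $\sum_{i,j}\|A_{X_i} X_j\|^2$, both equal to $2\|A_X Y\|^2$.

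Summing Ricci along the two vertical directions and invoking the Einstein condition yields
\[
2\lambda = Ric(U,U) + Ric(V,V) = 2c_1 + \bigl(\|A_X^* U\|^2 + \|A_X^* V\|^2 + \|A_Y^* U\|^2 + \|A_Y^* V\|^2\bigr) = 2c_1 + \|A\|^2,
\]
which is (i). An analogous computation along the two horizontal directions gives
\[
2\lambda = Ric(X,X) + Ric(Y,Y) = 2(c_2 \circ \pi) - 6\|A_X Y\|^2 + \|A\|^2 = 2(c_2 \circ \pi) - 2\|A\|^2,
\]
which is (ii). Subtracting (i) from (ii) eliminates $\lambda$ and yields $3\|A\|^2 = 2(c_2 \circ \pi - c_1)$, which is (iii).

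There is no serious conceptual obstacle: the entire argument reduces to bookkeeping with O'Neill's formulas specialized to the totally geodesic case. The mildly delicate point is matching the two equivalent expressions for $\|A\|^2$ in the specific $(2{+}2)$-split setting, after which (i) and (ii) follow as one-line consequences of the Einstein equation, and (iii) is algebra.
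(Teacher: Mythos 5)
Your proposal is correct and follows essentially the same route as the paper: compute $Ric(U,U)$, $Ric(V,V)$, $Ric(X,X)$, $Ric(Y,Y)$ via O'Neill's formulas for totally geodesic fibers, use $2\|A_XY\|^2=\|A\|^2$, and combine the vertical pair for (i), the horizontal pair for (ii), and subtract for (iii). The duality identity you verify for $\|A\|^2$ is exactly the ``direct calculation'' the paper invokes.
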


\begin{proof}
See page $250$, Corollary $9.62$ in \cite{besse2007einstein}. For completeness, we give a proof here.

\vskip0.1in

Let $U,V$ / $X,Y$ are orthonormal basis of $\mathcal{V}$ / $\mathcal{H}$, respectively. Then  by O'Neill's formula (\cite{O'Neil66}) , we have
$$\lambda = Ric (U,U)=c_1 + \|A^*_XU\|^2 + \|A^*_YU\|^2;$$
$$\lambda=Ric(V,V)=c_1+\|A^*_XV\|^2+\|A^*_YV\|^2;$$
$$\lambda=Ric(X,X)=c_2 \circ \pi- 3\|A_XY\|^2 +\|A^*_XU\|^2+\|A^*_XV\|^2;$$
$$\lambda=Ric(Y,Y)=c_2 \circ\pi- 3\|A_XY\|^2 +\|A^*_YU\|^2+\|A^*_YV\|^2.$$
On the other hand, by direct calculation, we see that $2 \|A_XY\|^2=\|A\|^2$. Hence
$$2c_1+\|A\|^2=2\lambda;$$
$$ 2c_2 \circ \pi -2\|A\|^2=2\lambda;$$
$$\|A\|^2=\frac{2}{3}(c_2 \circ \pi-c_1).$$

\end{proof}

By Lemmas \ref{isofiber} and \ref{besse}, we see that $c_1,\|A\|$ are constants on $M^4$ and $c_2$ is a constant on $N^2$.

\vskip0.1in

Fix $p\in M^4$. Locally we can always choose basic vector fields $X,Y$ such that $X,Y$ is an orthonormal basis of the horizontal distribution.
At point $p$, since the image of $A^*_X$ is perpendicular to $X$ and $dim \mathcal{V}=dim \mathcal{H}=2$,
$A^*_X$ must have nontrivial kernel. Then there exists some $v\in \mathcal{V} $ such that $\|v\|=1$ and $A^*_X(v)=0$. Extend $v$ to be a
local unit vertical vector field $V$ and choose $U$ such that $U,V$ is a local orthonormal basis of $\mathcal{V}$.
\begin{lem} \label{tensor}
$$A^*_XV(p)=0;$$
$$A^*_YV(p)=0.$$
\end{lem}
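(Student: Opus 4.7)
The plan is to observe that in this $2+2$ setting (both $\mathcal{H}$ and $\mathcal{V}$ are two-dimensional) the O'Neill $A^*$-tensor is tightly constrained by dimension and by duality with $A$, so that both equalities reduce to the single scalar condition $g(v, A_X Y(p)) = 0$.

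First I would record that $A^*_X V$ is tensorial in its vertical argument: for any smooth function $f$, $A^*_X(fV) = -(\nabla_X(fV))^h = -f(\nabla_X V)^h = f\, A^*_X V$, since $X(f)V$ is vertical and hence contributes nothing to the horizontal part. Therefore $A^*_X V(p) = A^*_X(V(p)) = A^*_X v = 0$ by the choice of $v$, which gives the first equality for free.

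For the second equality I would exploit the skew-symmetry $A_X X = A_Y Y = 0$ (immediate from $A_Z W = \tfrac{1}{2}[Z,W]^v$ for horizontal $Z,W$) together with the duality $g(A_X Y, V) = g(Y, A^*_X V)$, which follows from $g(Y,V)=0$ and metric compatibility of $\nabla$. These yield $g(A^*_X V, X) = g(V, A_X X) = 0$ and $g(A^*_Y V, Y) = g(V, A_Y Y) = 0$. Since $\dim \mathcal{H} = 2$, this forces $A^*_X V$ to be a scalar multiple of $Y$ and $A^*_Y V$ to be a scalar multiple of $X$; reading off the remaining component via duality, one obtains the closed-form expressions
$$A^*_X V \;=\; g(V, A_X Y)\, Y, \qquad A^*_Y V \;=\; g(V, A_Y X)\, X \;=\; -g(V, A_X Y)\, X.$$
In particular $\|A^*_X V\| = \|A^*_Y V\|$ at every point. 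Evaluating at $p$, the already established identity $A^*_X V(p) = 0$ forces $g(v, A_X Y(p)) = 0$, and the second closed form immediately yields $A^*_Y V(p) = -g(v, A_X Y(p))\, X(p) = 0$.

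There is no substantial obstacle here: the lemma is a pointwise consequence of the algebraic rigidity of the O'Neill tensor in the $2+2$ dimensional setting, where the entire $A$-tensor is encoded by the single vertical vector field $\xi := A_X Y$ (up to orientation of $\mathcal{H}$). The genuine work in proving Theorem \ref{Einstein} will come afterwards, in upgrading this pointwise vanishing to the global statement $A \equiv 0$ by combining this rigidity with the Einstein condition and the constancy of $\|A\|$ provided by Lemma \ref{besse}.
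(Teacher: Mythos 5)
Your proposal is correct and is essentially the paper's argument: the paper likewise gets the first identity from tensoriality of $A^*_X$ in the vertical slot, and its chain of equalities $g(A^*_YV,X)=-g(V,A_XY)=g(\nabla_XV,Y)=-g(A^*_XV,Y)=0$ is exactly the duality-plus-skew-symmetry computation you spell out, using $\dim\mathcal{H}=2$ to reduce $A^*_YV$ to its $X$-component. The only difference is presentational: you package the computation as the closed forms $A^*_XV=g(V,A_XY)Y$ and $A^*_YV=-g(V,A_XY)X$, which makes the underlying rigidity slightly more transparent but proves the same statement the same way.
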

\begin{proof}
We already see $A^*_XV(p)=A^*_{X,p}(v)=0$. On the other hand, at point $p$, we have
$$A^*_YV=g(A^*_YV, X)X=-g(\nabla_YV, X)X$$
$$=g(V, \nabla_YX)X=g(V, A_YX)X$$
$$=-g(V, A_XY)X=-g(V, \nabla_XY)X$$
$$=g(\nabla_XV, Y)X=-g(A^*_XV, Y)X=0.$$
\end{proof}
Since all fibers of $\pi$ are totally geodesic, by O'Neill's formula (\cite{O'Neil66}), we see that $K(X,U)=\|A^*_XU\|^2$ .
Because $(M^4,g)$ is Einstein, at point $p$, we have
$$\lambda=Ric(U,U)=c_1+\|A^*_XU\|^2+\|A^*_YU\|^2;$$
$$\lambda=Ric(V,V)=c_1+\|A^*_XV\|^2+\|A^*_YV\|^2;$$
Combined with Lemma \ref{tensor}, we see that $\lambda=c_1$ and $\|A^*_XU\|^2(p)=0$,
$\|A^*_YU\|^2(p)=0$.
Then $\|A\|^2(p)=0$. Hence $\|A\|^2\equiv 0$ on $M^4$ and $c_1=c_2$. Let $c=c_1=c_2$.
Then locally $\pi$ is the projection of a metric product $B^2(c)\times B^2(c)$ onto one of the factors,
where $B^2(c)$ is a two-dimensional compact manifold with constant curvature $c$.

\section{Conjecture 1 and the Weak Hopf Conjecture}

In this section we point out several interesting corollaries of Conjecture $1$.

\vskip0.1in

Suppose $(E,g)$ is a complete, open Riemannian manifold with nonnegative sectional curvature. By
a well known theorem of Cheeger and Gromoll \cite{cheeger1972structure},
$E$ contains a compact totally geodesic submanifold $\Sigma$, called the soul,
such that $E$ is diffeomorphic to the normal bundle of $\Sigma$.
Let $\Sigma_r$ be the distance sphere to $\Sigma$ of radius $r$.
Then for small $r>0$, the induced metric on $\Sigma_r$ has nonnegative sectional curvature
by a theorem of Guijarro and Walschap \cite{guijarro2000metric}. In \cite{Gro2003}, Gromoll and Tapp proposed the following conjecture:

\vskip0.1in

\noindent \textbf{Weak Hopf Conjecture} {\itshape Let $k\geq 3$. Then for $any$ complete metric with nonnegative sectional curvature
 on $S^n \times \mathbb{R}^k$, the induced metric on the boundary of a small metric tube about the soul can $not$ have positive sectional curvature.}

\vskip0.1in

 The case $n=2, k=3$ is of particular interest since the metric tube of the soul is diffeomorphic to $S^2 \times S^2$.

\vskip0.1in

Recall that a map between metric spaces $\sigma: X \rightarrow Y $ is a submetry if for all $
x\in X$ and $ r\in [0, r(x)]$ we have that $f(B(x, r)) = B(f(x), r)$, where $B(p, r)$ denotes the open
metric ball centered at $p$ of radius $x$ and $r(x)$ is some positive continuous function. If both $X$ and $Y$ are
Riemannian manifolds, then $\sigma$ is a Riemannian submersion of class $C^{1,1}$
by a theorem of Berestovskii and Guijarro \cite{berestovskii2000metric}.

\begin{prop}\label{weak}
 Suppose $\Sigma$ is a soul of $(E,g)$, where $(E,g)$ is a complete, open Riemannian manifold with nonnegative sectional curvature. If
  the induced metric on $\Sigma_r$ has positive sectional curvature at some point for some $r>0$,
 then there is a Riemannian submersion from $\Sigma_r$ to $\Sigma$ with fibers $S^{l-1}$, where
 $l=dim(E) - dim(\Sigma).$
\end{prop}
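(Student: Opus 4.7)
The plan is to show that the Sharafutdinov retraction $\sigma: E \to \Sigma$, restricted to the distance sphere $\Sigma_r$, provides the desired map. By Sharafutdinov's theorem, $\sigma$ is a distance-nonincreasing retraction onto the soul; in the nonnegatively curved setting it is in fact a submetry, and hence a Riemannian submersion of class $C^{1,1}$ by the theorem of Berestovskii--Guijarro cited in the previous paragraph. Its fibers are diffeomorphic to $\mathbb{R}^l$ and identify, via the diffeomorphism $E \cong \nu(\Sigma)$, with the normal spaces $\nu_p(\Sigma)$; moreover $d_\Sigma$ restricts to the distance-from-origin on each such fiber, since for $q \in \sigma^{-1}(p)$ the minimizing segment from $q$ to $\Sigma$ ends at $p$.

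Next I would verify that $\pi := \sigma|_{\Sigma_r}: \Sigma_r \to \Sigma$ is itself a submetry. The central observation is that $\nabla d_\Sigma$ is vertical with respect to $\sigma$: at any $q \in E \setminus \Sigma$, the minimizing segment from $q$ to $\sigma(q)$ lies inside the fiber $\sigma^{-1}(\sigma(q))$, so $\nabla d_\Sigma(q)$ is tangent to that fiber. Consequently the horizontal distribution $\mathcal{H}$ satisfies $\mathcal{H}_q \subset T_q \Sigma_r$ for every $q \in \Sigma_r$, and $\sigma$-horizontal lifts of curves in $\Sigma$ starting on $\Sigma_r$ remain on $\Sigma_r$. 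Given any $q \in \Sigma_r$ and any $p' \in \Sigma$ close to $p := \pi(q)$, lifting a minimal geodesic from $p$ to $p'$ horizontally through $q$ yields a curve in $\Sigma_r$ of the same length ending at some $q' \in \pi^{-1}(p')$, which gives one inclusion of the submetry condition; the reverse inclusion is immediate from the distance-nonincrease of $\pi$ (inherited from $\sigma$). A second application of Berestovskii--Guijarro then promotes $\pi$ to a Riemannian submersion of class $C^{1,1}$.

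For the fiber type, $\pi^{-1}(p) = \sigma^{-1}(p) \cap \Sigma_r$ consists of the points in the fiber $\sigma^{-1}(p) \cong \mathbb{R}^l$ at distance $r$ from the origin $p$, which is a topological $S^{l-1}$. The main obstacle is justifying the rigid structural properties of the Sharafutdinov fibers used above --- that each $\sigma^{-1}(p)$ is a smooth properly embedded copy of $\mathbb{R}^l$ on which $d_\Sigma$ is the radial distance, and that $\sigma$ is smooth enough for the horizontal-lifting argument to make sense --- which rests on Perelman's resolution of the Soul Conjecture and the subsequent smoothness results for the Sharafutdinov retraction. The positive sectional curvature hypothesis at some point of $\Sigma_r$ does not enter the existence argument for $\pi$ directly; it presumably plays its role in ensuring that the submersion one gets is genuinely nontrivial, which is how the proposition is applied to Wilking's example immediately afterwards.
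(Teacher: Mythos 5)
Your route is genuinely different from the paper's, and as written it has a real gap. The paper does not touch the Sharafutdinov retraction directly: it uses the positive-curvature hypothesis to invoke a theorem of Guijarro--Walschap (the dual foliation paper), which says that positive curvature at one point of $\Sigma_r$ forces the normal holonomy group of $\Sigma$ to act transitively on the normal spheres; Wilking's duality theorem (Corollary 5 of \cite{wilking2007duality}) then produces a single submetry $(E,g)\to \Sigma\times[0,+\infty)$ whose fibers are the holonomy orbits $S^{l-1}$, and restricting to $\pi^{-1}(\Sigma\times\{r\})=\Sigma_r$ and applying Berestovskii--Guijarro finishes the proof. In that argument the hypothesis is load-bearing: transitivity of the normal holonomy is exactly what guarantees, for \emph{every} $r>0$, that $\Sigma_r$ is a fiber of a global submetry and that each intersection with a Sharafutdinov fiber is a compact $(l-1)$-sphere.

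Your argument bypasses all of this and, as you yourself note, never uses the curvature hypothesis. That is the symptom of the gap: everything you need --- uniqueness of the foot point on $\Sigma$, smoothness of $d_\Sigma$ near $\Sigma_r$, the identification $\sigma^{-1}(p)\cap\Sigma_r\cong S^{l-1}$, and the fact that $d_\Sigma$ restricted to a Sharafutdinov fiber is the radial distance at radius $r$ --- is automatic only when $r$ lies inside the normal injectivity radius of $\Sigma$, i.e.\ for small $r$. The proposition is stated and used for a given $r>0$ (in the Wilking/Tapp example the radius is $1$, not ``small''), and for general $r$ the distance sphere need not even be a smooth hypersurface, nor need $\exp^\perp$ restricted to $\{v\in\nu_p\Sigma:|v|=r\}$ be injective; these are precisely the points where the paper substitutes the holonomy-orbit description coming from positive curvature. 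Separately, the structural inputs you defer to ``Perelman plus subsequent smoothness results'' (that $\sigma$ is a submetry with $\sigma\circ\exp^\perp$ equal to the bundle projection, that its fibers are $C^1$ submanifolds, and that horizontal lifts through a merely $C^{1,1}$ submersion exist and stay in $\Sigma_r$) are the actual content of the proof and would need explicit citations or arguments; flagging them as ``the main obstacle'' does not discharge them. If you restrict to small $r$ and supply those references, your argument does appear to give a (stronger, curvature-free) statement, but it does not prove the proposition as stated and applied in the paper.
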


\begin{proof}
In fact, by a theorem of Guijarro and Walschap in \cite{guijarro2008dual}, if $\Sigma_r$ has positive sectional curvature at some point,
 the normal holonomy group of $\Sigma$ acts transitively on $\Sigma_r$. By Corollary 5 in \cite{wilking2007duality}, we get a submetry
 $\pi: (E,g)\rightarrow \Sigma \times [0, +\infty)$ with fibers $S^{l-1}$, where $\Sigma \times [0, +\infty)$ is endowed with the product metric.
Then $\pi: (\pi^{-1}(\Sigma \times (0, +\infty)),g)\rightarrow \Sigma \times (0, +\infty)$ is also a submetry. By a theorem of Berestovskii and Guijarro in \cite{berestovskii2000metric},
$\pi$ is a $C^{1,1}$ Riemannian submersion. Then $\Sigma_r=\pi^{-1}(\Sigma \times \{{r}\})$ and
$\pi: \Sigma_r \rightarrow \Sigma$ is also a $C^{1,1}$ Riemannian submersion with fibers $S^{l-1}$, where $\Sigma_r$ is endowed with
the induced metric from $(E,g)$.
\end{proof}

\begin{prop}
When $k>n$, Conjecture $1$ implies Weak Hopf Conjecture.
\end{prop}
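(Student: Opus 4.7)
The strategy is a short contradiction argument that chains Proposition~\ref{weak} together with Conjecture~$1$. Suppose, for contradiction, that the Weak Hopf Conjecture fails for some pair $(n,k)$ with $k>n$: there is a complete nonnegatively curved metric on $E=S^n\times\mathbb{R}^k$ whose soul $\Sigma$ admits, for some $r>0$, a distance sphere $\Sigma_r$ on which the induced metric carries positive sectional curvature.

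First I would record the relevant dimensions. Since $E$ is diffeomorphic to $S^n\times\mathbb{R}^k$, the soul is diffeomorphic to $S^n$, so $\dim\Sigma=n$ and $l:=\dim E-\dim\Sigma=k$. The distance sphere $\Sigma_r$ is a smooth compact hypersurface of $E$, hence a compact Riemannian manifold in the induced metric. By hypothesis $\Sigma_r$ has positive sectional curvature everywhere, in particular at some point, so Proposition~\ref{weak} applies and yields a Riemannian submersion
\[
\pi:\Sigma_r\longrightarrow\Sigma
\]
whose fibers are diffeomorphic to $S^{l-1}=S^{k-1}$, and these are connected because $k\geq 3$.

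Next I would invoke Conjecture~$1$ directly for $\pi$. The total space $\Sigma_r$ is compact and positively curved; the submersion is nontrivial, since both the fiber dimension $k-1\geq 2$ and the base dimension $n\geq 1$ are positive. Conjecture~$1$ then forces
\[
\dim F<\dim\Sigma,\qquad\text{i.e.,}\qquad k-1<n.
\]
This is incompatible with the standing hypothesis $k>n$, producing the desired contradiction and completing the argument.

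The one point that will require a brief comment is the regularity of $\pi$: Proposition~\ref{weak} only asserts $\pi$ to be of class $C^{1,1}$, while Conjecture~$1$ is phrased for smooth Riemannian submersions. I expect this to be a cosmetic issue rather than a genuine obstacle---both $\Sigma_r$ and $\Sigma$ carry smooth Riemannian metrics and the conclusion of Conjecture~$1$ is a topological dimension inequality, so it should apply under the weaker $C^{1,1}$ assumption, or one simply reads Conjecture~$1$ as including this mildly weaker regularity class, which is the natural setting once one enters soul theory.
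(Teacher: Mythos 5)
Your argument is correct and is essentially identical to the paper's proof: both pass to the soul $\Sigma$ with $\dim\Sigma=n$, apply Proposition~\ref{weak} to obtain a Riemannian submersion $\Sigma_r\to\Sigma$ with fibers $S^{k-1}$, and derive the contradiction $k-1\geq n$ versus $\dim F<\dim N$, with the paper likewise flagging that Conjecture~1 must be read as applying to $C^{1,1}$ submersions. The only cosmetic difference is that you call $\Sigma$ diffeomorphic to $S^n$ where the paper more cautiously says it is a homotopy sphere; only $\dim\Sigma=n$ is actually used, so this does not affect the argument.
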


\begin{proof}
Suppose for some complete metric $g$ on $S^n \times \mathbb{R}^k$
with nonnegative sectional curvature, the induced metric on $\Sigma_r$ has positive sectional curvature
for some $r>0$, where $\Sigma$ is a soul.
Since $S^n \times \mathbb{R}^k$ is diffeomorphic to the normal bundle of $\Sigma$, we see that $\Sigma$ is a homotopy sphere and
$dim (\Sigma)=n$.
By Proposition \ref{weak}, we get a Riemannian submersion from $\Sigma_r$ to $\Sigma$ with fibers $S^{k-1}$, where
$\Sigma_r$ is endowed with the induced metric from $g$ and hence has positive sectional curvature.
Since $k>n$, we see $k-1 \geq n$, which is impossible if Conjecture $1$ is true for $C^{1,1}$ Riemannian submersions.
\end{proof}

\begin{Remark}
If Remark \ref{topo} in section $3$ is true, then by Proposition \ref{weak} again, any small metric tube about the soul can $not$ have positive sectional curvature
when the soul is odd-dimensional. This would give a solution to a question asked by K. Tapp in \cite{Tapp2012}.
\end{Remark}

{}
Department of Mathematics\\
University of Notre Dame\\
Notre Dame, Indiana, 46556.\\
E-mail address: {\em xychen100@gmail.com}

\end{document}